\def \bigamalg {\coprod }
\newcommand{\N}{{\mathbb N}}
\newcommand{\R}{{\mathbb R}}
\newcommand{\Q}{{\mathbb Q}}
\newcommand{\bp}{{\textbf P}}
\newcommand{\bm}{{\textbf M}}
\newcommand{\OO}{{\mathcal O}}
\newcommand{\PP}{{\mathcal P}}
\newtheorem{thm}{Theorem}[section]
\newtheorem{theorem}{Theorem}[section]
\newtheorem{corollary}[theorem]{Corollary}
\newtheorem{lemma}[theorem]{Lemma}
\theoremstyle{definition}
\newtheorem{defn}[theorem]{Definition}
\newtheorem{example}[theorem]{Example}
\theoremstyle{remark}
\newtheorem{rem}[theorem]{Remark}
\begin{document}
\pagestyle{plain}
\title{Quantales, generalised premetrics and free locales}
\author{J. Bruno}
\author{P. Szeptycki}
\maketitle

\begin{abstract} Premetrics and premetrisable spaces have been long studied and their topological interrelationships are well-understood. Consider the category ${\bf Pre}$ of premetric spaces and $\epsilon$-$\delta$ continuous functions as morphisms. The absence of the triangle inequality implies that the faithful functor ${\bf Pre} \to {\bf Top}$ - where a premetric space is sent to the topological space it generates - is not full. Moreover, the sequential nature of topological spaces generated from objects in ${\bf Pre}$ indicates that this functor is not surjective on objects either. Developed from work by Flagg and Weiss, we illustrate an extension ${\bf Pre}\hookrightarrow  {\bf P} $ together with a faithful and surjective on objects left adjoint functor ${\bf P} \to {\bf Top}$ as an extension of ${\bf Pre} \to {\bf Top}$. We show this represents an {\it optimal} scenario given that ${\bf Pre} \to {\bf Top}$ preserves coproducts only. The objects in $\bp$ are metric-like objects valued on {\it value distributive lattices} whose limits and colimits we show to be generated by free locales on discrete sets.
  \end{abstract}

\section{Introduction}\label{sec:intro}

As a refinement of Kopperman's work from \cite{MR935419}, in \cite{MR1452402} Flagg introduces a family of metric-like objects with the property that any topological space can be naturally generated by one such object. More precisely, for a value quantale $V = (V,\leq, +)$ the author defines the notion of a \emph{V-continuity space} to be a pair $(X,d)$ where $X$ is a set and $d:X^2 \to V$ is a map for which $d(x,x) = 0$ and $d(x,y)\leq d(x,z)+d(y,z)$ for all $x,y,z \in X$. Flagg adopts Kopperman's terminology and denotes any triplet $(X,V,d)$ a \emph{continuity space} (where $V$ and $(X,d)$ are as defined previously). As a generalisation of metrisable spaces, Flagg illustrates how any continuity space naturally generates a topological space. Conversely, and perhaps surprisingly, for any topological space $(X,\tau)$ Flagg constructs a value quantale $\Omega(\tau)$ and a $\Omega(\tau)$-continuity space $(X,d)$ so that $(X,\tau)$ is generated by the continuity space $(X,\Omega(\tau),d)$. In particular, any metrisable topological space is generated by some $[0,\infty]$-continuity space. These ideas are further developed and grounded within a categorical setting by Weiss in \cite{MR3334228} by establishing an equivalence of categories $M:{\bf M} \leftrightarrows {\bf Top}:\mathcal{O}$ where the objects of {\bf M} are Flagg's continuity spaces. Morphisms in {\bf M} are extensions of $\epsilon$-$\delta$ continuous functions between metric spaces and are shown to be equivalent to continuous functions. This category {\bf M} is shown to be a natural extension of {\bf Met} - of all metric spaces - and the following diagram is established,
 \[
 \xymatrix{ 
  {\bf Top} \ar@/_/[rr]_M & &  {\bf M} \ar@/_/[ll]_{\mathcal{O}}  \\
  {\bf Top_M} \ar[u]  & &  {\bf Met}  \ar[u]\ar[ll]_{\mathcal{O}}\\
  }
\]
where ${\bf Top_M}$ is the category of metrisable topological spaces and arrows going up are inclusions. 

The category ${\bf Pre}$ is the one whose objects $(X,d)$ are premetric spaces. That is, $d: X^2 \to \R$ is a function where we only require $d(x,x)=0$ for all $x\in X$. Morphisms in ${\bf Pre}$ are $\epsilon$-$\delta$ continuous functions. There exists the obvious functor $\mathcal{O}:{\bf Pre} \to {\bf Top}_{\bf P}$ that extends $\mathcal{O}:{\bf Met} \to {\bf Top}_{\bf M}$, where a premetric space is sent to the topological space it generates and ${\bf Top}_{\bf P}$ is the category of premetrisable topological spaces; a subset $O$ of $X$ is $\tau_d$-open if, and only if, for any $x\in O$ we can find $\epsilon > 0$ so that $B_\epsilon(x) \subseteq O$. Here two important issues arise: (a) by the sequential nature of objects in ${\bf Pre}$ the functor is not surjective on objects\footnote{In general, any triplet $(X,V,d)$ where $V$ is a complete linear order will yield a {\it radial} topology. It is this fact that forces us to go beyond the realm of linearly ordered sets.} (b) since premetrics are not required to satisfy the triangle inequality, epsilon balls are not necessarily open in the generated topology - actually, the centre of an epsilon ball might not belong to its interior, even if it is not empty. Indeed, take $X = \{a,b,c,d\}$ with $d: X^2 \to \R$ as
 \[
 d(x,y) = 
 \begin{cases}
 0 & \text{ if } \{x,y\} = \{a,b\}\text{ or } \{x,y\} = \{b,c\},\\
 2 & \text{ if } \{x,y\} = \{a,c\}, \text{ and}\\
1 & \text{ otherwise}.
 \end{cases}
 \]

\noindent
The reader can quickly verify that int$[B_2(a)] = \{d\}$. Consequently, for (b) it also follows that $\epsilon$-$\delta$ continuous functions are topologically continuous but the converse is certainly not true. In other words, (b) says that $\mathcal{O}:{\bf Pre} \to {\bf Top}$ is not full and, thus, it is not possible to replicate the above equivalences with an extension of {\bf Pre}. In light of $M:{\bf M} \leftrightarrows {\bf Top}:\mathcal{O}$ it is natural to ask: {\it how much is lost by dropping the triangle inequality from {\bf M}?} 

Let {\bf P} be the category of \emph{generalised premetrics spaces} whose objects are triplets $(X,V,d)$ - where $X$ is a set, $V$ is a value distributive lattice and the map $d:X^2 \to V$ must satisfy $d(x,x) = 0$. Morphisms in {\bf P} are $\epsilon$-$\delta$ continuous functions like the ones in {\bf M}. In fact, we show later that ${\bf M}$ is a reflective subcategory of ${\bf P}$, thus highlighting a natural procedure for adding the triangle inequality to any generalised premetric space. In spite of (a) and (b), we show that {\bf P} is remarkably similar to ${\bf Top}$. More precisely, as an obvious extension of $\mathcal{O}:{\bf Pre} \to {\bf Top}$ we show that the functor $\mathcal{O}: {\bf P} \to {\bf Top}$ is left adjoint. In other words, for a large collection of categorical constructions in {\bf Top} it is only necessary to take into account $\mathcal{O}$-images of $\epsilon$-$\delta$ continuous functions. This is most unexpected given the large discrepancy between $\epsilon$-$\delta$ continuity and topological continuity; a fact we highlight in more detail in Section~\ref{sec:Primer} where we investigate several scenarios in which both types of continuity coincide.

The outline of the paper is the following. In Section 2 we investigate premetric spaces, topological continuity vs. $\epsilon$-$\delta$ continuity and briefly remark some interesting facts regarding various categories thereof. This section naturally leads to Section 3 where we illustrate {\bf P} as an extension of {\bf Pre} and explore topological continuity vs. $\epsilon$-$\delta$ continuity in its full generality. We close this section by proving that $\mathcal{O}: {\bf P} \to {\bf Top}$ is left adjoint. Section 4 is concerned with constructions of (co)limits in {\bf P}. We approach these constructions by proving that set-indexed (co)cones have $U$-initial(-final) lifts where $U:{\bf P} \to {\bf Set}$ is the usual forgetful functor.

  \section{A primer on premetric spaces and sequential spaces}\label{sec:Primer} We illustrate some basic facts regarding premetrisability and topological continuity vs $\epsilon$-$\delta$ continuity. The topology $\tau_d$ on a set $X$ generated by a premetric $d:X^2\to \R$ is the one for which $U\in \tau_d$ if, and only if, for all $x\in U$ there exists an $\epsilon>0$ so that $U\supseteq B_\epsilon(x)=\{y\mid d(x,y)<\epsilon\}$ (it is a relatively straightforward task to show that any such topology is sequential\cite{MR1077251}). It is important to notice that, in general, $B_\epsilon(x)$ might not be open; the interior of such a set might not even contain $x$ itself. Hence, sequential convergence is not the same for $d$ as it is for $\tau_d$; it is only possible to claim that $d$-convergence implies $\tau_d$-convergence. Consequently, the reader can quickly verify that $\epsilon$-$\delta$ continuity always implies topological continuity between any pair of premetric spaces; the converse is not true. As a matter of fact, the equivalence between both types of continuity occurs precisely when the same is true for both types of convergence. Recall that any function $f:(X,\rho) \to (Y,\sigma)$ between sequential spaces is continuous if, and only if, for any sequence $(x_n)$ in $X$ and $x\in X$ we have $((x_n)\rightarrow_\rho x \Rightarrow f(x_n)\rightarrow_\sigma f(x))$. It is not hard to verify that the very same holds for premetric spaces.
  
  \begin{lemma} A function $f:(X,d)\to(Y,m)$ between premetric spaces is $\epsilon$-$\delta$ continuous if, and only if, for any sequence $(x_n)$ in $X$ and $x\in X$ we have 
  
  \[(x_n) \rightarrow_d x \Rightarrow f(x_n) \rightarrow_m f(x).\]
  \end{lemma}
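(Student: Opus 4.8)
The plan is to prove the two implications separately: the forward one is immediate, and the reverse one goes by contraposition via the usual extraction of a witnessing sequence.

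For the forward implication I would assume $f$ is $\epsilon$-$\delta$ continuous and fix a sequence with $(x_n)\to_d x$. Given $\epsilon>0$, continuity of $f$ at $x$ supplies a $\delta>0$ with $d(x,y)<\delta\Rightarrow m(f(x),f(y))<\epsilon$; since $(x_n)\to_d x$ there is an $N$ with $d(x,x_n)<\delta$ for all $n\geq N$, whence $m(f(x),f(x_n))<\epsilon$ for all $n\geq N$. That is precisely $f(x_n)\to_m f(x)$, and nothing beyond $d(x,x)=0$ and $m(y,y)=0$ is used.

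For the reverse implication I would argue the contrapositive. If $f$ is not $\epsilon$-$\delta$ continuous, then it fails to be continuous at some point $x\in X$, so there is an $\epsilon_0>0$ such that for every $\delta>0$ there exists a point $y$ with $d(x,y)<\delta$ yet $m(f(x),f(y))\geq\epsilon_0$. Applying this with $\delta=1/n$ for each $n\in\N$ produces a sequence $(x_n)$ with $d(x,x_n)<1/n$ and $m(f(x),f(x_n))\geq\epsilon_0$. The first inequality forces $(x_n)\to_d x$ while the second shows $f(x_n)\not\to_m f(x)$, contradicting the right-hand side of the equivalence. Here the countable cofinal family $\{1/n\}$ inside $(0,\infty)$ is doing the work of a ``countable local base'', even though the balls $B_{1/n}(x)$ need not be open.

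The only delicate points are bookkeeping rather than substantive, and I would state them explicitly: first, that ``$\epsilon$-$\delta$ continuous'' means continuous at every point, so its negation is failure at a single point $x$, which is exactly where the sequence must be built; second, that premetrics are not assumed symmetric, so one must fix once and for all the convention that $(z_n)\to_d z$ abbreviates $d(z,z_n)\to 0$, matching the defining inequality $y\in B_\epsilon(x)\iff d(x,y)<\epsilon$, and then use that same slot consistently in both directions. With those conventions pinned down there is no real obstacle: the argument is the premetric analogue of the classical fact that sequential continuity characterises continuity in first-countable spaces, and the absence of the triangle inequality — hence the non-openness of balls — plays no role, since the statement is phrased in terms of $d$- and $m$-convergence, not $\tau_d$- and $\tau_m$-convergence.
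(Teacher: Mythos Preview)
Your proof is correct. The paper does not actually supply a proof for this lemma---it is stated immediately after the remark ``It is not hard to verify that the very same holds for premetric spaces'' and left to the reader. Your argument is exactly the expected one; in fact the paper reproduces the same construction (failure of $\epsilon$-$\delta$ continuity at some $x$ yields a $d$-convergent sequence whose image fails to $m$-converge) inside the proof of the very next lemma.
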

   In fact the coincidence of both types of continuity depends only on the codomain of the function.
  \begin{lemma} For any premetric space $(Y,m)$ the following are equivalent.
  \medskip
  \begin{itemize}
  \item For any function $(X,d) \to (Y,m)$, topological continuity and $\epsilon$-$\delta$ continuity coincide. 
  \medskip
  \item The notions of $\tau_m$-convergence and $m$-convergence coincide.
  \end{itemize} 
  \end{lemma}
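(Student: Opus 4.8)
The plan is to prove the two implications separately, with the first direction being the easy one.

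\medskip

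\noindent\textbf{Proof proposal.} First I would establish the downward implication, that the coincidence of continuity for all maps into $(Y,m)$ forces $\tau_m$-convergence and $m$-convergence to agree. Since we always have that $m$-convergence implies $\tau_m$-convergence, only the converse needs checking. Suppose $(y_n) \to_{\tau_m} y$ in $(Y,m)$; I want to build a single premetric space $(X,d)$ and a map $f:(X,d)\to(Y,m)$ that is topologically continuous but whose $\epsilon$-$\delta$ continuity would be obstructed unless $(y_n)\to_m y$. The natural choice is to take $X = \{\ast\}\cup\{1/n : n\in\N\}\subseteq\R$ with the premetric $d$ inherited from the real line (so $1/n \to_d 0 = \ast$), and let $f(\ast) = y$, $f(1/n) = y_n$. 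Then $f$ is $\epsilon$-$\delta$ continuous at every point $1/n$ automatically (those are $d$-isolated in the sense that small balls are singletons), and at $\ast$ topological continuity holds because $(1/n)\to_d \ast$ gives $(1/n)\to_{\tau_d}\ast$, hence $f(1/n)=y_n \to_{\tau_m} y = f(\ast)$ by the sequential characterisation of topological continuity (Lemma~2.1's analogue for $\tau_d,\tau_m$ combined with the convergent-sequence topology on $X$). By hypothesis $f$ is then $\epsilon$-$\delta$ continuous, and applying Lemma~2.1 (the sequential characterisation of $\epsilon$-$\delta$ continuity) to the sequence $(1/n)\to_d\ast$ yields $f(1/n) = y_n \to_m y$, as required.

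\medskip

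\noindent For the upward implication, assume $\tau_m$-convergence and $m$-convergence coincide, and let $f:(X,d)\to(Y,m)$ be topologically continuous; I want $\epsilon$-$\delta$ continuity. By Lemma~2.1 it suffices to show that $(x_n)\to_d x$ implies $f(x_n)\to_m f(x)$. From $(x_n)\to_d x$ we get $(x_n)\to_{\tau_d} x$, so topological continuity gives $f(x_n)\to_{\tau_m} f(x)$, and by the hypothesis on $(Y,m)$ this upgrades to $f(x_n)\to_m f(x)$. That finishes this direction, and it is genuinely short — the content has been pushed into Lemma~2.1 and the convention that $d$-convergence always implies $\tau_d$-convergence.

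\medskip

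\noindent The main obstacle is the first direction: one must make sure the witnessing space $(X,d)$ really does produce \emph{topological} continuity of $f$ without accidentally making $f$ already $\epsilon$-$\delta$ continuous for trivial reasons, and one must verify that $\tau_d$ on the subspace $\{\ast\}\cup\{1/n\}$ is exactly the convergent-sequence topology (equivalently, that no spurious open sets appear). This is where the choice of $d$ matters: using the metric from $\R$ guarantees that $\{1/n\}$ is $\tau_d$-open for each $n$ and that the only nontrivial neighbourhood filter is at $\ast$, so the topologically continuous maps out of it are precisely those respecting the one convergence $1/n\to\ast$. With that subspace lemma in hand the argument closes cleanly; everything else is bookkeeping with the two sequential characterisations already recorded.
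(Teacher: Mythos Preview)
Your proof is correct and follows essentially the same strategy as the paper's: both use the convergent-sequence space $(\omega+1,d)$ (your $\{\ast\}\cup\{1/n\}$) as the test object for the implication from coincidence of continuity to coincidence of convergence, and both reduce the other implication to the sequential characterisation of $\epsilon$-$\delta$ continuity recorded in the preceding lemma. The only differences are cosmetic---you argue each direction directly while the paper phrases both by contrapositive, and you invoke Lemma~2.1 explicitly where the paper unpacks it inline.
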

  \begin{proof} ($\Leftarrow$) Whether or not $\tau_m$-convergence implies $m$-convergence, $\epsilon$-$\delta$ continuity implies topological continuity. Hence, assume that $\tau_m$-convergence implies $m$-convergence and let $f: (X,d) \to (Y,m)$ be any function which is not $\epsilon$-$\delta$ continuous. This means that there exists an $\epsilon >0$ and an $x\in X$ for which given any $\delta >0$ we can find $y\in X$ with $d(x,y)<\delta$ but $m(f(x),f(y))\geq \epsilon$. Consequently, there exists a $d$-convergent (and thus $\tau_d$-convergent) sequence $(y_n) \rightarrow x$ for which $f(y_n) \not \rightarrow f(x)$ with respect to $m$. Since we assumed that $\tau_m$-convergence implies $m$-convergence, then $f(y_n) \not \rightarrow f(x)$ with respect to $\tau_m$ either and $f$ is not continuous.\\
  $(\Rightarrow)$ Assume $\tau_m$-convergence to be strictly weaker than
$m$-convergence and let convergent sequence $(x_n)\to x$ in $Y$ be a witness of this fact. Take the convergent sequence space $(\omega+1,d)$ with, say, $d(n,\omega) = \frac{1}{n}$ and the map $ \omega \to Y$ for which $n \mapsto x_n$ and $\omega \mapsto x$. This map is continuous but not $\epsilon$-$\delta$ continuous.
\end{proof}  
  
  \begin{corollary}\label{cor:equiv} For any function $(X,d) \to (Y,m)$ between premetric spaces, each of the following conditions imply that both types of continuity coincide. 
\medskip
  \begin{enumerate}
  \item For all $y\in Y$ and $\epsilon > 0$, $B_\epsilon^m(y)$ is open with respect to $\tau_m$.
  \medskip
  \item $m$ satisfies the triangle inequality.
    \medskip
  \item $\tau_m$-sequential limits are unique.
   \medskip
  \item $\tau_m$ is $T_2$.
  \end{enumerate}
  \end{corollary}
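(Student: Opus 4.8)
The plan is to lean on the preceding Lemma, which reduces the assertion ``topological continuity and $\epsilon$-$\delta$ continuity coincide for every $(X,d)\to(Y,m)$'' to the single statement that $\tau_m$-convergence and $m$-convergence coincide on $(Y,m)$. Since $m$-convergence always entails $\tau_m$-convergence, it suffices, under each of (1)--(4), to prove the reverse: every $\tau_m$-convergent sequence is $m$-convergent to the same limit.

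Conditions (1), (2), (4) are quick. If (1) holds and $(y_n)\to_{\tau_m}y$, then for each $\epsilon>0$ the ball $B^m_\epsilon(y)$ is a $\tau_m$-open neighbourhood of $y$, so $(y_n)$ is eventually inside it, i.e.\ $m(y,y_n)<\epsilon$ eventually, whence $(y_n)\to_m y$. Condition (2) implies (1) by the familiar argument that the triangle inequality makes each ball open. Condition (4) implies (3), since a $T_2$ topology has unique sequential limits. So everything reduces to (3).

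Assume (3) and suppose, for a contradiction, that $(y_n)\to_{\tau_m}y$ while $(y_n)\not\to_m y$. Then for some $\epsilon>0$ there is a subsequence, still $\tau_m$-converging to $y$, along which $m(y,y_n)\ge\epsilon$; in particular each such $y_n\ne y$. If some value recurs infinitely often along it, the corresponding constant subsequence $\tau_m$-converges to both $y$ and that value --- two distinct limits, contradicting (3); otherwise I pass to an injective subsequence and assume the $y_n$ are distinct, $m(y,y_n)\ge\epsilon$ for all $n$, and $(y_n)\to_{\tau_m}y$. Setting $T=\{y_n:n\in\N\}$, the ball $B^m_\epsilon(y)$ then omits all of $T$, so $y$ is not in the $\tau_m$-interior of $B^m_\epsilon(y)$ and $y\in\overline{T}\setminus T$. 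The plan from here is an induction along the levels of the iterated sequential closure of $T$, showing that every point of $\overline{T}$ is either in $T$ or is the $\tau_m$-limit of a sequence with values in $T$ --- either an honest subsequence of $(y_n)$ or a constant sequence $(y_{n_0},y_{n_0},\dots)$ with $m(w,y_{n_0})=0$. Applied to $y\in\overline T\setminus T$, this yields a ``mediating'' point $w$ and a sequence drawn from $T$ that $\tau_m$-converges to $w$; that same sequence also $\tau_m$-converges to $y$ (when it is a subsequence of $(y_n)$) or to $y_{n_0}$ (in the constant case), while $w$ is distinct from both because $m(y,y_n)\ge\epsilon>0$. Either way we get a sequence with two distinct $\tau_m$-limits, contradicting (3). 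Hence $(y_n)\to_m y$, the two convergences agree, and the preceding Lemma completes the proof.

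The step I expect to be the main obstacle is exactly this induction. Without the triangle inequality one cannot simply diagonalise to pull a subsequence of $(y_n)$ down to one that $m$-converges to $y$ --- that failure is the whole reason balls need not be open --- so it is precisely the uniqueness hypothesis (3) that must be fed into each level of the induction to force the process either to close up at the first level or to expose a pair of distinct limits. Some care is also needed to keep the pretopological ($m$-)convergence that generates the closure operator of $\tau_m$ distinct from $\tau_m$-convergence throughout, and to verify that the sequences extracted from $T$ really are subsequences of $(y_n)$ up to the harmless constant-value case.
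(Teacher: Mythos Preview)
Your treatment of (1), (2) and (4) matches the paper's: (1) is direct, (2)$\Rightarrow$(1), and (4)$\Rightarrow$(3). The reduction via the preceding Lemma is exactly the paper's strategy. For (3) your setup is also the same --- pass to a subsequence with $m(y,y_n)\ge\epsilon$, dispose of repeated values by uniqueness, and study $T=\{y_n\}$. Your idea of running an induction along the iterated sequential closure is in fact \emph{more} careful than the paper, which simply asserts that a limit point of $T$ must be the limit of a subsequence; your induction is what actually justifies that assertion in a space that need not be Fr\'echet.

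The gap is in how you finish. Your induction, carried out correctly, shows that every sequence in $T$ converging to some $w$ forces $w=y$ (infinitely many distinct values) or $w=y_{n_0}\in T$ (constant case, by uniqueness); hence $\overline{T}=T\cup\{y\}$. But applying this to $y$ itself gives no contradiction: the sequence from $T$ that converges to $y$ is just a subsequence of $(y_n)$, and its only $\tau_m$-limit is $y$. Your claim that the ``mediating point $w$'' is distinct from $y$ is exactly what fails --- $m(y,y_n)\ge\epsilon$ tells you the subsequence does not $m$-converge to $y$, but that says nothing about $\tau_m$-convergence, which is precisely the implication you are trying to establish. So the argument is circular at that point.

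The correct last step is the paper's: once you know $\overline{T}=T\cup\{y\}$, you \emph{build an open set} separating $y$ from $T$. Concretely, $Y\setminus T=(Y\setminus\overline{T})\cup\{y\}$ is $\tau_m$-open, since points of $Y\setminus\overline{T}$ have balls inside that open set and $y$ has $B^m_\epsilon(y)\subseteq Y\setminus T$ by construction. This open neighbourhood of $y$ misses every $y_n$, contradicting $(y_n)\to_{\tau_m}y$. The paper phrases this slightly differently (covering $B_\epsilon(y)\setminus\{y\}$ by open sets $U_{y'}$ disjoint from $T$ and taking $\bigcup U_{y'}\cup B_\epsilon(y)$), but it is the same mechanism: use the closure computation to manufacture an open set, rather than to exhibit two limits directly.
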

  \begin{proof} The proof of (1) follows from standard arguments for metric spaces and (2) follows from (1). For (4) notice that $T_2$ implies uniqueness of limits, therefore we focus on proving (3). For the remaining case we show that $\tau_m$-convergence implies $m$-convergence. Assume that for some sequence $(x_n)$ in $Y$ we have that $(x_n) \rightarrow_{\tau_m} x$ but $(x_n) \not \rightarrow_m x$. Let $\epsilon >0$ so that $B_\epsilon(x) \cap \{x_n\mid n\in M\} =\emptyset$ for some infinite $M\subseteq \N$. By uniqueness of limits we can deduce that for any $y \in B_\epsilon(x)\smallsetminus \{x\} := B$, $y$ cannot be a limit point of the set $\{x_n\mid n\in M\}$. If that was the case, then one could find a subsequence of $(x_n)_{n\in M}$ converging to that $y$ and thus a contradiction. Hence, for each $y \in B$ let $U_y$ be any open set containing $y$ and so that $U_y \cap \{x_n\mid n\in M\} =\emptyset$. It follows that 
  \[
  \displaystyle \bigcup_{y \in B} U_y \cup B_\epsilon(x)
  \]
 is open, contains $x$ and is disjoint from $(x_n)_{n\in M}$. Hence, $(x_n)$ does not $\tau_m$-converge to $x$ either and we arrive at a contradiction. Consequently, by the previous lemma both types of continuity agree.\end{proof}
  
 None of the above conditions are necessary. The previous lemma narrows the scope of candidates for a topologically continuous function $(X,d) \to (Y,m)$ which is not $\epsilon$-$\delta$ continuous: $(Y,\tau_m)$ must be at most $T_1$ and $(Y,m)$ must not have unique limits of sequences. The following example illustrates just that.
  
 \begin{example} 
Take two countably infinite disjoint sets $A=\{a_i\mid i\geq 2\},B=\{b_i\mid i\geq 2\}$ and let $Y=A\cup B$. Define $m:Y^2 \to \R$ by 
 \[
 m(x,y) = 
 \begin{cases}
 \frac{1}{\text{max}\{n,m\}} & \text{ if } \{x,y\} = \{a_n,b_m\}\text{ for a pair }n,m\in \N\\
1 & \text{ otherwise}.
 \end{cases}
 \]
 \noindent
 Notice that $m$ is {\it symmetric} (i.e., $m(x,y) = m(y,x)$) and {\it separated} (i.e., $m(x,y)>0$ for $x\not = y$), hence, $d$ fails only to satisfy the triangle inequality. Generate a topology $\tau$ on $Y$ as usual: $O\in \tau_m$ if, and only if, for any $x\in O$ there exists an $\epsilon>0$ so that $B_\epsilon(x) \subseteq O$. By design, $A$ is the set of limit points of $B$ and $B$ is the set of limit points of $A$. Also, it is simple to observe that any open set is cofinite. The converse is also true. Let $O$ be cofinite and $p,q \in \N$ so that $p$ is the least number for which $\forall i\geq p$, $a_i \in O$ and $q$ is the least number for which $\forall i\geq q$, $b_i \in O$. For each $a_i \in O$ let $\delta = \frac{1}{\text{max}\{q,i\}}$ and notice that $B_\delta(a_i)\subset O$. The same is true for all $b_i \in O$ and hence, $O$ is open. 
Next, split the rationals into two mutually dense sets $C,D$ and let $f:\Q\to Y$ be a bijection for which $f(C) = A$, $f(D) = B$. Assume $d$ is the usual metric on $\Q$ and notice that since $\tau_m$ is the cofinite topology on $Y$, then $f$ is topologically continuous. However, $f$ fails to be $\epsilon$-$\delta$ continuous about each and every point in its domain. Indeed, without loss of generality, let $x\in C$ and notice that for any $\delta> 0 $, $B_\delta(x)$ contains infinitely many points from $D$. Whence, choosing $\epsilon = \frac{1}{2}$ yields that for all $\delta > 0$ $f(B_\delta(x)) \not \subseteq B_\epsilon(f(x))$ and that $f$ is not $\epsilon$-$\delta$ continuous.

 \end{example}

Let {\bf Pre}, ${\bf Top}_{\bf P}$ and {\bf Seq} denote the categories of premetric spaces (with $\epsilon$-$\delta$ continuous), premetrisable topologies and sequential topologies (with continuous functions as morphisms in the latter two), respectively. It is known that the bicomplete category {\bf Seq} is a coreflective subcategory of {\bf Top}. It is not closed under topological limits; limits in {\bf Seq} are constructed by applying the convergent-open topology to underlying products and subsets (see \cite{MR0180954} and \cite{MR0222832}). Not all sequential spaces are premetrisable. In fact, more is true: neither Fr\'{e}chet nor premetrisability imply each other. Example 5.1 from \cite{MR0222832} illustrates a premetrisable space that is not Fr\'{e}chet. For a Fr\'{e}chet space that is not premetrisable consider the following example.

\begin{example}\label{exam:frechetfan} Let $X$ be the topological products of countably many copies of $\omega + 1$ and quotient all of $\omega \times \{\omega\}$, and denote this point $\infty$ (this is the {\it Fr\'{e}chet Fan}). The resulting space is Fr\'{e}chet but fails to be premetrisable. Indeed, one can easily verify that epsilon balls must be open in $X$ for any premetric on it since $X\smallsetminus \infty$ is discrete. However, the Fr\'{e}chet Fan is not first countable and, thus, can not be premetrised.
\end{example}

The Fr\'{e}chet Fan is an excellent example of a space that cannot be generated by means of evaluating distances between points on a linear order. Both conditions are weaker than first-countability (that Fr\'{e}chet is weaker is a well-known result). 

\begin{lemma} Any first countable space is premetrisable.
\end{lemma}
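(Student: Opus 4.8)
The plan is to take a countable neighbourhood base at each point and turn it into a premetric by assigning distances that shrink as one descends the base. Let $(X,\tau)$ be first countable and fix for each $x\in X$ a decreasing neighbourhood base $\{U^x_n : n\in\N\}$ with $U^x_0 = X$ and $U^x_{n+1}\subseteq U^x_n$. The natural attempt is to define
\[
d(x,y) = \inf\Bigl\{\tfrac{1}{n+1} : y\in U^x_n\Bigr\},
\]
with the convention that the infimum of the empty set is $1$, and $d(x,x)=0$. First I would check the premetric axiom $d(x,x)=0$, which is immediate since $x\in U^x_n$ for all $n$.

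Next I would verify that $\tau_d = \tau$. For the inclusion $\tau_d\subseteq\tau$: given $x$ and $\epsilon>0$, pick $n$ with $\tfrac{1}{n+1}<\epsilon$; then $B_\epsilon(x)\supseteq\{y : d(x,y)\le\tfrac{1}{n+1}\} = \{y : y\in U^x_n\} = U^x_n$, wait — I need to be slightly careful about whether $B_\epsilon(x)$ equals $U^x_n$ or contains it, but in either direction the epsilon ball $B_\epsilon(x)$ is a union of tail sets $U^x_k$ and hence a neighbourhood of $x$ in $\tau$; so every $\tau_d$-open set is $\tau$-open. For the reverse inclusion $\tau\subseteq\tau_d$: given $\tau$-open $O$ and $x\in O$, choose $n$ with $U^x_n\subseteq O$; then taking $\epsilon = \tfrac{1}{n+1}$ gives $B_\epsilon(x)\subseteq U^x_n\subseteq O$ provided $B_\epsilon(x)$ only picks up points $y$ with $d(x,y)<\tfrac{1}{n+1}$, i.e. points $y\in U^x_m$ for some $m\ge n$, hence $y\in U^x_n\subseteq O$. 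So $O$ is $\tau_d$-open, and $\tau = \tau_d$, showing $X$ is premetrisable.

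The main obstacle — really the only delicate point — is the bookkeeping around the strict-versus-non-strict inequalities in the definition of $B_\epsilon(x)$ and of $d$, together with the edge case where $y$ lies in no $U^x_n$ beyond $U^x_0$. One clean way to sidestep it is to use the values $\tfrac{1}{n}$ (rather than $\tfrac{1}{n+1}$) indexed from $n\ge 1$ and set $d(x,y)=\inf\{\tfrac1n : y\in U^x_n\}$ with $d(x,y)=1$ if $y\notin U^x_1$; then $B_{1/n}(x) = \{y : d(x,y)<\tfrac1n\} = U^x_n$ exactly, because $d(x,y)<\tfrac1n$ holds iff $y\in U^x_m$ for some $m>n$, which by nestedness is equivalent to $y\in U^x_{n}$ after reindexing — in any case each basic epsilon ball is precisely a basic neighbourhood, and conversely, which makes the two topologies coincide on the nose. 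It is worth noting that this $d$ need not satisfy the triangle inequality, consistent with the fact (recorded above) that premetrisability is strictly weaker than metrisability; no such inequality is claimed or needed here.
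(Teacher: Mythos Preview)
Your proposal is correct and follows essentially the same idea as the paper: define $d(x,y)$ in terms of the deepest basic neighbourhood $U^x_n$ containing $y$ (the paper writes this piecewise as $d(x,y)=0$ if $y\in\bigcap_n U_n(x)$, $d(x,y)=\tfrac1n$ if $y\in U_n(x)\smallsetminus U_{n+1}(x)$, and $1$ otherwise, which is precisely your infimum formulation). The paper actually stops after writing down $d$, whereas you supply the verification that $\tau_d=\tau$; apart from the self-corrections and the harmless off-by-one in your alternative indexing, the argument is sound.
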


\begin{proof}Take any first countable space $X$, and for each point $x\in X$ select a countable nested collection of neighbourhoods about it, $U_n(x)$. Let $d:X^2 \to \R$ as

 \[
 d(x,y) = 
 \begin{cases}
 0 & \text{ if } y \in \bigcap_n U_n(x),\\
 \frac{1}{n} & \text{ if } y\in U_n(x)\smallsetminus U_{n+1}(x),\\
1 & \text{ otherwise}.
 \end{cases}
 \]
\end{proof}

The category {\bf Pre} has equalisers and coproducts. The former are simple to construct: for a premetric space $(X,d)$ and $E\subseteq X$, take $m$ on $E$ as the restriction of $d$ on $E$. The functor ${\bf Pre} \to {\bf Top}$ does not preserve equalisers. Example 5.1 in \cite{MR0222832} illustrates a sequential space with a non-sequential subspace and since ${\bf Pre}$ is closed under equalisers the claim follows. As for coproducts, let $\phi:\R \to [0,1)$ be any order embedding and for a collection $(X_i,d_i)_{i\in I}$ define $Y = \prod_i X$, and $m: Y^2 \to \R$ so that $m(x,y) = {\phi}\circ d_i(x,y)$ when $x,y\in X_i$ (for some $i$), and $1$ otherwise. A moment's thought verifies that ${\bf Pre} \to {\bf Top}$ preserves these colimits and that they also exist in ${\bf Top}_{\bf P}$. We use Example~\ref{exam:frechetfan} to show that coequalisers do not exist in {\bf Pre}: let $(X,d)$ be the sum in {\bf Pre} of $\omega$ copies of the convergent sequence $(\frac{1}{n})\cup\{0\}$ with the usual metric. Let $Y$ be the quotient set where by all limit points (i.e., the 0's) are glued together, and assume that for some premetric $r:Y^2\to \R$, $(Y,r)$ represents the coequaliser of the above scenario in {\bf Pre}. Since the quotient map $q:X \to Y$ must be $\epsilon$-$\delta$ continuous it follows that any epsilon ball about $q(0) = \overline{0}$ contains all but finitely many elements of each convergent sequence in $X$. Next, notice that for any open set in the Fr\'{e}chet Fan, one can forge a metric on $Y$, say $s$, that would witness such an open set making the function $q:(X,d) \to (Y,s)$ $\epsilon$-$\delta$ continuous. Since $(Y,r)$ was assumed to be the coequaliser, then $id_Y: (Y,r) \to (Y,s)$ must also be $\epsilon$-$\delta$ continuous and, in turn, $(Y,r)$ would generate the Fr\'{e}chet Fan on $Y$. A contradiction.

 \section{Constructing {\bf P} and $\mathcal{O}:{\bf P} \to {\bf Top}$}\label{sec:const}
 We begin by illustrating the construction of {\bf P} and later describe its relationship with {\bf Top}. First, we begin by recalling some basic facts and definitions regarding lattices (see \cite{MR1452402}). For a lattice $L$ and a pair $x,y \in L$, we say that $y$ is \emph{well above} $x$ and write $y\succ x$ if whenever $x\ge\bigwedge S$, with $S\subseteq L$, there exists some $s\in S$ such that $y\ge s$. A well-known characterization of completely distributive lattices (see \cite{MR2612143}) is the following.

\begin{theorem} A lattice $L$ is completely distributive if, and only if, for all $y \in L$ 
$$
y = \bigwedge \{a\in L\mid a \succ y\}.
$$
\end{theorem}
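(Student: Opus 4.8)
The plan is to prove both implications of the characterisation of complete distributivity in terms of the relation $\succ$ (``well above''). Recall that a complete lattice $L$ is completely distributive precisely when arbitrary meets distribute over arbitrary joins (equivalently, by the classical Raney characterisation, when $L$ is a homomorphic image, via a map preserving arbitrary meets and joins, of a complete ring of sets). I would lean on Raney's theorem: $L$ is completely distributive if and only if for every $y\in L$ one has $y=\bigvee\{x\in L\mid x\triangleleft y\}$, where $x\triangleleft y$ means that whenever $y\le\bigvee S$ there is some $s\in S$ with $x\le s$ (the ``totally below'' or ``way way below'' relation). The relation $y\succ x$ in the paper is exactly this relation $\triangleleft$ read in the opposite lattice $L^{\mathrm{op}}$: $y\succ x$ iff $y\mathrel{\triangleleft^{\mathrm{op}}}x$. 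Since $L$ is completely distributive if and only if $L^{\mathrm{op}}$ is, the stated formula $y=\bigwedge\{a\mid a\succ y\}$ is just Raney's criterion applied to $L^{\mathrm{op}}$, and the theorem follows immediately. So the cleanest route is: (i) state Raney's join-form characterisation as the known result, (ii) observe that $\succ$ is the order-dual of the totally-below relation, (iii) invoke self-duality of complete distributivity.

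If instead a self-contained argument is wanted, I would argue directly. For the easy direction ($\Leftarrow$): assume $y=\bigwedge\{a\mid a\succ y\}$ for all $y$, and prove the infinite distributive law $\bigwedge_{i\in I}\bigvee_{j\in J_i}x_{i,j}=\bigvee_{f\in\prod_i J_i}\bigwedge_{i\in I}x_{i,f(i)}$. The inequality $\ge$ is automatic; for $\le$, take any $a\succ \bigwedge_i\bigvee_j x_{i,j}$ and show $a\ge$ the right-hand side — wait, that is the wrong direction, so more carefully: one shows every $a$ well above the left side dominates the right side, hence the left side, being the meet of all such $a$, is $\le$ the right side. Here the definition of $\succ$ is used for each factor $\bigvee_j x_{i,j}$: since $a\succ\bigwedge_i(\cdots)\ge$ is not quite enough, one instead uses that $a$ well above a meet forces $a$ above one of the meetands only when the index set is the family being met, so one needs $a\succ\bigwedge_i\bigvee_j x_{i,j}$ to produce, for each $i$, an index $f(i)$ with $a\ge x_{i,f(i)}$; this requires applying the well-above property to the presentation of the left side as a meet over $i$ of the joins, which yields a single $i$, not all $i$ — this is the subtle point and is why the genuinely correct statement of Raney's relation and an interpolation step are needed. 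For the hard direction ($\Rightarrow$): assume $L$ completely distributive; for fixed $y$ set $y'=\bigwedge\{a\mid a\succ y\}$. Clearly $y\le y'$. For the reverse, write each $a$ with $a\succ y$ — one shows using complete distributivity that the set $\{a\mid a\succ y\}$ is ``large enough'', typically by the interpolation property $a\succ y\implies\exists b\,(a\succ b\succ y)$ (which itself follows from complete distributivity), and then a diagonal/compactness argument forces $\bigwedge\{a\mid a\succ y\}\le y$.

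The main obstacle, and the step I would expect to consume most of the work if done from scratch, is the interpolation property for $\succ$ together with the delicate bookkeeping in the $(\Leftarrow)$ direction translating the pointwise identity $y=\bigwedge\{a\mid a\succ y\}$ into the full infinitary distributive law — getting the quantifier over the index set $i$ right is exactly where naive attempts break. For a paper that merely needs this as a tool, I would therefore not reprove it: I would cite \cite{MR2612143} (or Raney's original papers) for the equivalence and add at most a one-line remark that the formula stated here is the order-dual of Raney's join-criterion, noting that complete distributivity is a self-dual property. That keeps the exposition honest and short while still making the logical dependence transparent.
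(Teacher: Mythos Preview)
The paper does not prove this theorem at all: it is stated as a known characterisation with a bare citation to \cite{MR2612143}. Your final recommendation---cite Raney and, if desired, add the one-line remark that the displayed meet-formula is the order-dual of Raney's join criterion together with the self-duality of complete distributivity---is exactly what the paper does (minus the remark), and your duality argument is correct.

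Your attempted self-contained proof, however, targets the wrong distributive law. With the hypothesis ``every $y$ is the meet of the elements well above it'' the law that falls out cleanly is the dual one,
\[
\bigvee_{i\in I}\bigwedge_{j\in J_i} x_{i,j}\;=\;\bigwedge_{f\in\prod_i J_i}\bigvee_{i\in I} x_{i,f(i)}.
\]
Here $\le$ is automatic; for $\ge$ take any $a$ well above the left-hand side. For each fixed $i$ the left-hand side dominates $\bigwedge_{j} x_{i,j}$, so the definition of $\succ$ yields some $f(i)\in J_i$ with $a\ge x_{i,f(i)}$. Then $a\ge\bigvee_i x_{i,f(i)}\ge$ the right-hand side, and since the left-hand side is the meet of all such $a$, equality follows. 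Your sketch aimed at $\bigwedge_i\bigvee_j=\bigvee_f\bigwedge_i$ instead, and that is precisely why the quantifier over $i$ came out wrong and you found yourself reaching for interpolation; no interpolation is needed once you attack the correct identity.
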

 
A \emph{value distributive lattice} is a completely distributive lattice $V$ for which $V_\prec = \{a \in V \mid a \succ 0\}$ forms a filter. A simple example of a value distributive lattice is the extended positive real line $[0,\infty]$. Following previous work from Flagg and Kopperman we define the following.



\medskip

\begin{defn} Given a value distributive lattice $V$, a \emph{$V$-space} is a pair $(X,d)$ so that $d:X\times X \to V$ for which $d(x,x) = 0$ for all $x\in X$. Any triple $(V,X,d)$, where $(X,d)$ is a $V$-space is called a \emph{continuity space}.
\end{defn}

 The category ${\bf P}$ will be that of all continuity spaces. Objects in {\bf P} are the triples
$(V,X,d)$ where $V$ is a value distributive lattice and $(X,d)$ is a $V$-space, and a morphism $(V,X,d)\to(W,Y,m)$ is a function $f:X\to Y$ such that for
every $x\in X$ and for every $\epsilon\in W_\prec$
there exists $\delta\in V_\prec$ such that for all
$x'\in X$ if $d(x',x)\prec \delta$ then $m(f(x'),f(x)) \prec \epsilon$. We will refer to these morphisms as \emph{ $\epsilon$-$\delta$ continuous functions}. Every ordinary premetric
space $(X,d)$ is a $V$-space for $V=[0,\infty]$. Hence, ${\bf Pre}$ is a full subcategory of ${\bf P}$.

\begin{rem} The term $V$-space, for a value quantale $V = (V,+)$, was initially introduced in \cite{MR1452402} to denote a pair $(X,d)$ so that $d:X\times X \to V$ for which $d(x,x) = 0$ for all $x\in X$ and $d(x,y)\leq d(x,z)+d(y,z)$ for all $x,y,z \in X$. In this paper we do not require transitivity when referring to $V$-spaces and thus we have no need for a binary operation $+:V^2 \to V$. Nonetheless, in the sequel we illustrate a natural way to introduce the triangle inequality to objects in ${\bf P}$ based on the equivalence ${\bf M} \leftrightarrows {\bf Top}$.
 \end{rem}
 
\begin{defn}
Let $(X,d)$ be a $V$-space and $\epsilon\in V$ with $\epsilon\succ0$. The set \emph{
}$B_{\epsilon}(x)=\{y\in X\mid d(y,x)\prec\epsilon\}$ is the \emph{$\epsilon$-ball }with radius $\epsilon$ about the point $x\in X$.
 \end{defn}

 \begin{lemma}
Let $(X,d)$ be a $V$-space. Declaring a set $U\subseteq X$ to be open if for every $x\in U$ there exists $\epsilon\succ0$ such
that $B_{\epsilon}(x)\subseteq U$ defines a topology on $X$. 
\end{lemma}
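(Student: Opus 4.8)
The plan is to verify the three axioms of a topology for the collection $\tau_d = \{U \subseteq X \mid \forall x \in U\ \exists \epsilon \succ 0,\ B_\epsilon(x) \subseteq U\}$ directly, the only nontrivial point being closure under finite intersections, where the filter property of $V_\prec$ enters.

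First I would observe that $\emptyset \in \tau_d$ vacuously, and that $X \in \tau_d$ since for any $x \in X$ we have $d(x,x) = 0 \prec \epsilon$ for any $\epsilon \succ 0$ (such $\epsilon$ exists because, $V$ being completely distributive and nontrivial, $0 = \bigwedge\{a \mid a \succ 0\}$ forces $V_\prec$ to be nonempty — and in any case $B_\epsilon(x) \subseteq X$ trivially). Next, closure under arbitrary unions is immediate: if $\{U_i\}_{i \in I} \subseteq \tau_d$ and $x \in \bigcup_i U_i$, then $x \in U_{i_0}$ for some $i_0$, so there is $\epsilon \succ 0$ with $B_\epsilon(x) \subseteq U_{i_0} \subseteq \bigcup_i U_i$.

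The main step is finite intersections. Let $U, W \in \tau_d$ and $x \in U \cap W$. Pick $\epsilon_1 \succ 0$ with $B_{\epsilon_1}(x) \subseteq U$ and $\epsilon_2 \succ 0$ with $B_{\epsilon_2}(x) \subseteq W$; so $\epsilon_1, \epsilon_2 \in V_\prec$. Since $V_\prec$ is a filter (this is exactly where the definition of \emph{value distributive lattice} is used), there exists $\epsilon \in V_\prec$ with $\epsilon \le \epsilon_1$ and $\epsilon \le \epsilon_2$; concretely one may take $\epsilon = \epsilon_1 \wedge \epsilon_2$, which lies in $V_\prec$ precisely because $V_\prec$ is closed under finite meets. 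Then for $y \in B_\epsilon(x)$ we have $d(y,x) \prec \epsilon \le \epsilon_i$, so $d(y,x) \prec \epsilon_i$ for $i = 1,2$, giving $B_\epsilon(x) \subseteq B_{\epsilon_1}(x) \cap B_{\epsilon_2}(x) \subseteq U \cap W$. (Here I use the routine fact that $a \prec b$ and $b \le c$ imply $a \prec c$, which is immediate from the definition of $\prec$.) Hence $U \cap W \in \tau_d$, and by induction $\tau_d$ is closed under all finite intersections.

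The only genuine obstacle is making sure the filter hypothesis on $V_\prec$ is invoked correctly: without it, $\epsilon_1 \wedge \epsilon_2$ need not be well above $0$, and the argument collapses — this is the whole reason value distributive lattices, rather than arbitrary completely distributive lattices, are the right setting. Everything else is bookkeeping with the order-theoretic relation $\prec$ and the defining property $d(x,x) = 0$.
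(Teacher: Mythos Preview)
Your proof is correct and follows essentially the same approach as the paper: both arguments note that closure under unions is immediate and handle finite intersections by invoking the filter property of $V_\prec$ to conclude $\epsilon_1 \wedge \epsilon_2 \succ 0$, whence $B_{\epsilon_1 \wedge \epsilon_2}(x) \subseteq B_{\epsilon_1}(x) \cap B_{\epsilon_2}(x)$. Your version is simply a bit more explicit about the routine verifications (nonemptiness of $V_\prec$, the monotonicity step $a \prec b \le c \Rightarrow a \prec c$) that the paper leaves implicit.
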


\begin{proof}
For a continuity space $(X,V,d)$ let $\tau$ be the collection of all $U\subset X$ for which the hypothesis is satisfied. Clearly, $\tau$ is closed under unions. The rest follows from the well-above relation. That is, let $U_1,U_2\in \tau$ and $x\in U_1 \cap U_2$. By definition, we can find $\epsilon_1,\epsilon_2\in V_\prec$ so that $B_{\epsilon_1}(x) \subseteq U_1$ and $B_{\epsilon_2}(x) \subseteq U_2$. Since $V$ is a value distributive lattice, then $\delta = \epsilon_1 \wedge \epsilon_2 \in V_\prec$ and $B_\delta(x) \subseteq B_{\epsilon_1}(x) \cap B_{\epsilon_2}(x) \subseteq U_1 \cap U_2$.
\end{proof}

For any collection of sets $X$ and $A\subseteq X$, we say that $A$ is \emph{downwards closed} provided that $B,C \in X$ and $B\subseteq C$, and $C\in A$ then $B\in A$. Also, we follow standard set-theoretic notation in that for any set $X$, we let $[X]^{<\omega}$ denote the collection of all finite subsets of $X$. The following construction is key for developing (co)limits in $\bp$. For any set $X$ let 

\[\Omega(X)=\{ A\subseteq [X]^{< \omega} \mid A \text{ is downwards closed}\}.\]

\begin{lemma} Given a set $X$, ordering $\Omega(X)$ by reverse set inclusion yields $(\Omega(X), \leq)$ as a value distributive lattice where $p \succ 0$ if, and only if, $p$ is finite.
\end{lemma}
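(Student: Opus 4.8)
The plan is to unpack the definitions and verify each claim in turn, the nontrivial content being complete distributivity. First I would fix notation: elements of $\Omega(X)$ are downwards-closed families $A \subseteq [X]^{<\omega}$, ordered by $A \leq B$ iff $A \supseteq B$. Note that $0$ (the bottom element) is all of $[X]^{<\omega}$ and the top element is $\emptyset$; arbitrary meets $\bigwedge_i A_i$ are unions $\bigcup_i A_i$ and arbitrary joins $\bigvee_i A_i$ are intersections $\bigcap_i A_i$ (these are again downwards closed, so $\Omega(X)$ is a complete lattice). The easy observations are that $\Omega(X)$ is closed under these operations and hence complete.

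\medskip
Next I would pin down the well-above relation. I claim that for $p, q \in \Omega(X)$, $q \succ p$ iff $q$ is finite and $q \not\subseteq p$ (equivalently, using the order on $\Omega$, $q$ is ``small'' as a subset of $[X]^{<\omega}$ and $q \not\leq p$ in $\Omega$, i.e. $q \not\supseteq p$ fails... careful: $q \succ p$ should allow $q \geq p$). Let me restate cleanly: I would show $q \succ p$ iff $q$ is a finite member of $\Omega(X)$ with $q \leq p$, i.e. $p \subseteq q$ as subsets of $[X]^{<\omega}$ — wait, $q\le p$ means $q\supseteq p$. So the claim is: $q\succ p$ iff $q$ is finite (as a set) and $q\supseteq p$. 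To prove $\Leftarrow$: suppose $q$ finite, $q\supseteq p$, and $p \geq \bigwedge_i A_i = \bigcup_i A_i$, i.e. $\bigcup_i A_i \subseteq p \subseteq q$; since $q$ is a finite set and each $A_i$ is downwards closed, I need some single $A_i$ with $q \subseteq A_i$, i.e. $q \geq A_i$. This requires an additional argument: enumerate the finitely many elements of $q$; for each one that lies in $\bigcup A_i$ pick an index containing it; take the join (union as subsets — no, intersection)... hmm, this is where the filter condition on $V_\prec$ does NOT obviously help because here we need it for general $p$. Actually the cleanest route: it suffices to prove $q \succ 0$ iff $q$ finite, and then derive the general statement from the filter/lattice structure, OR prove the general statement directly using that $q$ finite implies $q = \bigvee\{\{F\}^\downarrow : F \in q\}$ is a finite join of the atoms $\{F\}^\downarrow$ (the downward closure of a singleton), each of which is easily seen to be well-above $0$. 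For $\Rightarrow$: if $q$ is infinite, write $q = \bigcup \{q \cap [X]^{\le n} : n \in \omega\}$ (finite ranks), realize $0 = \bigwedge_n (q\cap[X]^{\le n})$... no wait we need $p\ge \bigwedge$. Take $p=0$: $0 = [X]^{<\omega} = \bigcup_n [X]^{\le n}$ so $0 \geq \bigwedge_n [X]^{\le n}$ (equality), but no single $[X]^{\le n} \leq q$ when $q$ is infinite of unbounded rank — and if $q$ has bounded rank but is still infinite, use a different decomposition enumerating $q = \{F_k\}$ and setting $A_k = [X]^{<\omega}\setminus\{F_k\}$, so $\bigwedge_k A_k = \bigcap A_k$... no, $\bigwedge$ is union: $\bigcup_k A_k = [X]^{<\omega} \ni$ everything, fine that's $0$; but is $q \geq A_k$, i.e. $q \supseteq A_k$? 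No. Good — so no witness, hence $q \not\succ 0$. I would also handle: $q$ finite but... always fine. And the case $q\succ p$ with $q\not\supseteq p$: then pick $S=\{p\}$, $p\ge\bigwedge S = p$, need $q\ge p$ i.e. $q\supseteq p$, contradiction. So indeed $q \succ p$ forces $q \supseteq p$ and $q$ finite.

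\medskip
Then complete distributivity follows from the characterization quoted as the Theorem above: for each $y \in \Omega(X)$ I must show $y = \bigwedge\{a : a \succ y\} = \bigcup\{a : a \text{ finite}, a \supseteq y\}$. The inclusion $\supseteq$ is clear since each such $a \supseteq y$. For $\subseteq$: every $F \in y$ lies in the finite set $\{F\}^\downarrow \cup y$? That's not finite if $y$ is infinite. Instead: $F \in y$ implies $\{F\}^\downarrow$ is a finite downwards-closed set, and $\{F\}^\downarrow \cup (\text{nothing})$... I want a finite $a \supseteq y$ containing $F$ — but if $y$ is infinite no finite set contains $y$! So the correct reading must be that $\bigwedge\{a\mid a\succ y\}$ uses $\bigwedge = \bigcup$, giving $\bigcup\{a : a\succ y\}$, and I need this to equal $y$. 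Given $F \in y$: is there a finite $a \succ y$ with $F \in a$? We showed $a \succ y$ needs $a \supseteq y$, impossible for infinite $y$ — so this would fail! I must have the order backwards. Re-examining: ``reverse set inclusion'' and $p \succ 0$ iff $p$ finite: with $0$ = the empty family $\emptyset$ and top = $[X]^{<\omega}$; meets become intersections, joins become unions. Then $\bigwedge S = \bigcap S$, and $0 = \emptyset \geq \bigcap S$ is automatic, so $p \succ 0$ would force $p \geq$ some $s \in S$ for every $S$ with $\bigcap S = \emptyset$ — taking $S = \{[X]^{<\omega}\setminus\{F\} : F\}$ gives $\bigcap S = \emptyset$ and $p \supseteq$? $p \leq s$ means $p \supseteq s$... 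The main obstacle, and where I would spend the most care, is getting the direction of the order exactly right so that ``$p \succ 0 \iff p$ finite'' comes out true; once that is correctly set up, the filter condition on $\Omega(X)_\prec$ reduces to: the intersection (= meet) of two finite downwards-closed sets is finite and downwards closed (clear), and the upward-closure condition is the monotonicity of $\succ$ in its first argument; and complete distributivity is then a direct application of the quoted Theorem, checking $y = \bigwedge\{a \mid a \succ y\}$ by showing every ``piece'' $F$ of $y$ (or rather, every element $G$ not forced to be in the meet) is separated by some finite $a \succ y$. I expect the bulk of the write-up to be this bookkeeping, with the one genuine lemma being that finiteness characterizes the well-above-$0$ elements.
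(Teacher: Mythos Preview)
The paper gives no argument here at all---it simply cites Example~1.1 of Flagg---so there is nothing to compare against, and a direct verification is the right thing to attempt. But your sketch never resolves the order confusion you yourself flag, and this produces concrete errors. With $A\le B\iff A\supseteq B$ one has $0=[X]^{<\omega}$, $\bigwedge S=\bigcup S$, and (crucially) $q\ge s$ means $q\subseteq s$ as sets; several of your later steps silently reverse this, which is why your attempt at complete distributivity collapses.

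Two specific gaps. First, in ``$q$ infinite $\Rightarrow q\not\succ 0$'' your witnesses $A_k=[X]^{<\omega}\smallsetminus\{F_k\}$ are \emph{not} downward closed (every proper superset of $F_k$ lies in $A_k$ while $F_k$ itself does not), so they are not elements of $\Omega(X)$ at all. A family that works: since an infinite downward-closed $q$ must contain infinitely many singletons, set $Y=\{x:\{x\}\in q\}$ and $s_x=\{F:x\notin F\}$ for $x\in Y$; these are downward closed, their union is $[X]^{<\omega}$ (every finite $F$ omits some $x\in Y$), and no $s_x$ contains $q$ since $\{x\}\in q\smallsetminus s_x$. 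Second, in ``$q$ finite $\Rightarrow q\succ 0$'' the missing idea is to pass to $F^{*}=\bigcup q\in[X]^{<\omega}$: then $q\subseteq\{G:G\subseteq F^{*}\}$, and any $s$ in a covering family with $F^{*}\in s$ must, by downward closure of $s$, contain all of $\{G:G\subseteq F^{*}\}\supseteq q$, i.e.\ $q\ge s$. The same device---using the finite principal element $\{G:G\subseteq F\}\succ y$ for each $F\in y$---gives $y=\bigwedge\{a:a\succ y\}$ and hence complete distributivity via the quoted theorem; the filter condition on $\Omega(X)_\prec$ is then just that finite unions (the meets) of finite sets are finite and subsets (the elements $\ge$ a given one) of finite sets are finite.
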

\begin{proof} This is part of Example 1.1 in \cite{MR1452402}.
\end{proof}

\begin{theorem}\label{thm:PtoTop}
The functor $\mathcal{O}:{\bf P}\to{\bf Top}$ which sends a continuity space to the topological space it generates is surjective on objects and faithful. 
\end{theorem}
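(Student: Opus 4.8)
The plan is to split the statement into its two halves: faithfulness is essentially formal, so I would dispose of it right after checking that $\mathcal{O}$ is a well-defined functor, and then concentrate the work on surjectivity on objects. On objects $\mathcal{O}$ sends $(V,X,d)$ to $(X,\tau_d)$, a space by the lemma above, and on a morphism $f\colon(V,X,d)\to(W,Y,m)$ it returns the underlying set map $f\colon X\to Y$; this is $\tau_d$-$\tau_m$ continuous by the usual argument, for if $U\in\tau_m$ and $x\in f^{-1}(U)$, picking $\epsilon\succ0$ with $B_\epsilon(f(x))\subseteq U$ and then, by $\epsilon$-$\delta$ continuity, $\delta\succ0$ with $f(B_\delta(x))\subseteq B_\epsilon(f(x))$, one gets $B_\delta(x)\subseteq f^{-1}(U)$. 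Functoriality is immediate since $\mathcal{O}$ alters nothing on underlying functions, and faithfulness follows at once: a $\bp$-morphism between two fixed continuity spaces \emph{is} a function subject to the $\epsilon$-$\delta$ condition, and $\mathcal{O}$ returns that very function, so two distinct morphisms with a common source and target cannot have the same image.

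For surjectivity on objects, fix a topological space $(X,\tau)$; following Flagg, I would exhibit a value distributive lattice and a compatible distance directly. Take $V=\Omega(\tau)$ --- the construction above applied to the \emph{set} $\tau$ --- which the preceding lemma makes a value distributive lattice with $p\succ0$ exactly when $p$ is a finite family, and define
\[
d(y,x)=\{\,F\in[\tau]^{<\omega}\mid y\in\bigcap\{\,V\in F : x\in V\,\}\,\}
\]
(with the convention $\bigcap\emptyset=X$). Passing from $F$ to a subset only shrinks the family being intersected, so $d(y,x)$ is downward closed; and $d(x,x)=[\tau]^{<\omega}=0$ because $x$ lies in each of the open sets being intersected. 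Hence $(X,d)$ is a $V$-space and it remains only to identify $\tau_d$ with $\tau$.

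Both inclusions reduce to two facts about the well-above relation in $\Omega(\tau)$, where one must remember that the order is \emph{reverse} inclusion, so $\bigwedge S=\bigcup S$, the bottom is $0=[\tau]^{<\omega}$, and $p\le q$ means $q\subseteq p$: (i) $p\prec q$ forces $q\subseteq p$ (take $S=\{p\}$); and (ii) if $F\in p$ and $q\subseteq\,\downarrow\!F:=\{G\in[\tau]^{<\omega}\mid G\subseteq F\}$, then $p\prec q$, because a hypothesis $p\ge\bigwedge S$ just says $p\subseteq\bigcup S$, so $F$ lies in some $s\in S$, whence $q\subseteq\,\downarrow\!F\subseteq s$, i.e.\ $q\ge s$. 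For $\tau\subseteq\tau_d$: given $U\in\tau$ and $x\in U$, let $\epsilon=\,\downarrow\!\{U\}=\{\emptyset,\{U\}\}$, which is finite, hence $\succ0$; since $x\in U$ we have $\{U\}\in d(y,x)\iff y\in U$, so (i) gives $B_\epsilon(x)\subseteq U$ and (ii), with $F=\{U\}$, gives $U\subseteq B_\epsilon(x)$ --- thus $B_\epsilon(x)=U$ and $U\in\tau_d$. For $\tau_d\subseteq\tau$: if $U\in\tau_d$ and $x\in U$, pick a finite $\epsilon\succ0$ with $B_\epsilon(x)\subseteq U$, put $F_0=\bigcup_{G\in\epsilon}G\in[\tau]^{<\omega}$ and $W=\bigcap\{\,V\in F_0 : x\in V\,\}$, a finite intersection of members of $\tau$ (or $X$), so that $W\in\tau$ and $x\in W$; every $y\in W$ has $F_0\in d(y,x)$ and $\epsilon\subseteq\,\downarrow\!F_0$, so (ii) yields $d(y,x)\prec\epsilon$, giving $x\in W\subseteq B_\epsilon(x)\subseteq U$. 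Hence $\tau_d=\tau$, so $\mathcal{O}(\Omega(\tau),X,d)=(X,\tau)$.

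The one place that needs genuine care is bookkeeping: keeping the reverse-inclusion order of $\Omega(\tau)$ straight while unwinding the well-above relation, i.e.\ establishing (i) and (ii); after that the computation of each $B_\epsilon(x)$ is mechanical, and the $V$-space check and the $\epsilon$-$\delta$-implies-continuous step are entirely routine. As a cross-check --- and an alternative route --- one may instead quote Flagg's theorem (equivalently, surjectivity on objects of $\mathcal{O}\colon{\bf M}\to{\bf Top}$ from Weiss's equivalence ${\bf M}\leftrightarrows{\bf Top}$), noting that the underlying lattice of any value quantale is a value distributive lattice, so that a Flagg continuity space over $\Omega(\tau)$, with its $+$ forgotten, is already an object of $\bp$ generating $(X,\tau)$; the display above is simply the concrete form of this observation.
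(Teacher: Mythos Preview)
Your proof is correct and follows essentially the same approach as the paper: both use Flagg's $\Omega(\tau)$ construction with the same distance function (your $d(y,x)$ is exactly the paper's $d(x,y)$ rewritten), and both verify that the resulting continuity space generates $\tau$. Your treatment is in fact more complete---you spell out faithfulness, establish the two well-above facts (i) and (ii) explicitly, and prove \emph{both} inclusions $\tau\subseteq\tau_d$ and $\tau_d\subseteq\tau$, whereas the paper only displays the first and defers to Flagg for the rest.
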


\begin{proof}
Verifying that $\epsilon$-$\delta$ continuous functions are also continuous is done in very much the same way as with premetric spaces. The following is due to Flagg and can be found in \cite{MR1452402} pg. 273: to show surjectivity of $\mathcal{O}$ take any topological space $(X,\tau)$ and construct an $\Omega(\tau)$-space
$(X,d)$ for which 
$$d(x,y)=\{F\in[\tau]^{<\omega}\mid\mbox{for all } U \in F \mbox{ if }  x \in U \mbox{ then } y \in U \}.$$
\noindent
Let $x\in U \in \tau$ and denote $\epsilon = \{\emptyset , \{U\}\}$. Construct $B_\epsilon(x)$ and notice
$$y\in B_\epsilon(x) \Rightarrow d(x,y) \prec \epsilon \Rightarrow  d(x,y) \supseteq \epsilon \Rightarrow y\in U.$$
\end{proof}

In Section 3.2 we show that $\mathcal{O}:{\bf P}\to{\bf Top}$ is left adjoint. The construction $\Omega(X)$ for a set $X$ is the dual of the free locale on $X$ (see \cite{MR861951}). We will frequently employ this construction in the sequel when developing (co)limits in {\bf P}. 

\subsection{Topological continuity vs $\epsilon$-$\delta$ continuity in $\bp$} In much the same spirit as with premetric spaces we show that when topological net convergence implies $\epsilon$-$\delta$ net convergence, topological continuity is equivalent to $\epsilon$-$\delta$ continuity. 

\begin{defn} Let $(X,V,d)$ be any continuity space and $(x_i)_{i\in I}$ be any net in $X$. We say that $(x_n)$ \emph{$d$-converges to a point $x\in X$} whenever for all $\epsilon \succ 0$ there exists $i_0 \in I$ so that for all $i\geq i_0$, $x_i \in B_\epsilon(x)$.
\end{defn}

By definition, $d$-convergence is stronger than $\tau_d$-convergence. Recall that topological continuity can also be characterised in terms of nets: a function $f:X \to Y$ is continuous if, and only if, it preserves net convergence. The following is then straightforward to prove.

  \begin{lemma} A function $f:(X,V,d)\to(Y,W,m)$ between continuity spaces is $\epsilon$-$\delta$ continuous if, and only if, $f$ preserves net convergence.
  \end{lemma}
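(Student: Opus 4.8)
The plan is to mirror the argument already used for premetric spaces, but with nets replacing sequences. The statement has two directions. The forward direction ($\epsilon$-$\delta$ continuity implies preservation of net convergence) is the easy one: given a net $(x_i)\to_d x$ in $X$ and $\epsilon\in W_\prec$, pick $\delta\in V_\prec$ witnessing $\epsilon$-$\delta$ continuity at $x$, then choose $i_0$ with $x_i\in B_\delta(x)$ for $i\ge i_0$ (possible since $(x_i)$ $d$-converges), and conclude $m(f(x_i),f(x))\prec\epsilon$, i.e. $f(x_i)\in B_\epsilon(f(x))$ for all $i\ge i_0$. Hence $f(x_i)\to_m f(x)$.

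For the converse I would argue by contraposition: suppose $f$ is not $\epsilon$-$\delta$ continuous, so there is a point $x\in X$ and an $\epsilon\in W_\prec$ such that for every $\delta\in V_\prec$ there exists a point $x_\delta\in X$ with $d(x_\delta,x)\prec\delta$ but $m(f(x_\delta),f(x))\not\prec\epsilon$. The key step is to organise these witnesses into a net. Since $V$ is a value distributive lattice, $V_\prec$ is a filter, hence directed downward by $\wedge$; so $(V_\prec,\ge)$ is a directed set, and $\delta\mapsto x_\delta$ is a net indexed by it. This net $d$-converges to $x$: given any $\eta\succ 0$, for every $\delta\le\eta$ in $V_\prec$ we have $d(x_\delta,x)\prec\delta\le\eta$ (using that $\prec$ followed by $\le$ still gives $\prec$, a routine property of the well-above relation), so $x_\delta\in B_\eta(x)$ eventually. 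But $f(x_\delta)\not\to_m f(x)$, since $m(f(x_\delta),f(x))\not\prec\epsilon$ for every index, so the net $f(x_\delta)$ is never eventually in $B_\epsilon(f(x))$. Thus $f$ does not preserve net convergence.

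Finally I would combine this with the remark preceding the lemma — that $d$-convergence is stronger than $\tau_d$-convergence and that topological continuity is exactly preservation of net convergence for $\tau_d$. A net preserved for all $d$-convergent nets is in particular enough, because any $\tau_d$-convergent net need not be $d$-convergent; but the implication we actually need is only ``$f$ preserves $d$-convergence $\Rightarrow$ $f$ preserves $\tau_d$-convergence'', which does not follow formally from $d$-convergence being stronger, so here one reuses the structure of the contrapositive above: if $f$ is topologically continuous it preserves $\tau_d$-convergence, hence $d$-convergence (as $d$-convergent $\Rightarrow$ $\tau_d$-convergent), giving one direction; and the contrapositive argument in the previous paragraph gives the other. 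I expect the only mildly delicate point to be confirming that the directed set $(V_\prec,\ge)$ genuinely exists and is nonempty — this is where the hypothesis that $V$ is a \emph{value} distributive lattice (so $V_\prec$ is a filter, in particular nonempty and downward directed) is essential — together with the bookkeeping that $d(x_\delta,x)\prec\delta$ and $\delta\le\eta$ yield $x_\delta\in B_\eta(x)$.
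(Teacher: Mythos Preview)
Your first two paragraphs constitute a complete and correct proof, and they are exactly the argument the paper has in mind when it calls the result ``straightforward'' (the paper gives no details). The key point you identify --- that $V_\prec$ is a filter and hence $(V_\prec,\ge)$ is a directed set indexing a net of witnesses to the failure of $\epsilon$-$\delta$ continuity --- is the right idea, and your verification that $d(x_\delta,x)\prec\delta\le\eta$ gives $d(x_\delta,x)\prec\eta$ is correct.

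Your third paragraph, however, is unnecessary and reflects a misreading of the statement. The lemma is purely about $d$-net convergence (the notion defined immediately before it), not about $\tau_d$-convergence. Once you have shown ``$\epsilon$-$\delta$ continuous $\Leftrightarrow$ preserves $d$-net convergence'' in paragraphs one and two, you are done. The relationship between $d$-convergence and $\tau_d$-convergence, and the question of when $\epsilon$-$\delta$ continuity coincides with topological continuity, is the content of the \emph{next} lemma in the paper, not this one. So simply delete the third paragraph.
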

  \begin{lemma} For any continuity space $(Y,W,m)$ the following are equivalent.
  \smallskip
  \begin{enumerate}
  \item For any function $(X,V,d) \to (Y,W,m)$, topological continuity and $\epsilon$-$\delta$ continuity coincide. 
  \medskip
  \item The notions of $\tau_m$-convergence and $m$-convergence coincide.\\
  \end{enumerate} 
  \end{lemma}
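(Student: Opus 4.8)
The plan is to mimic exactly the proof given earlier for premetric spaces (the second Lemma of Section~\ref{sec:Primer}), replacing sequences by nets and replacing the role of the codomain premetric by the codomain continuity space $(Y,W,m)$. Since the earlier lemma of Section~3.1 already tells us that $\epsilon$-$\delta$ continuity coincides with preservation of net convergence, the two implications become statements purely about nets and topologies.

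\medskip

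For the implication $(2)\Rightarrow(1)$: $\epsilon$-$\delta$ continuity always implies topological continuity (proved in Theorem~\ref{thm:PtoTop}), so only the converse needs work. Suppose $f:(X,V,d)\to(Y,W,m)$ is not $\epsilon$-$\delta$ continuous. Then there is $x\in X$ and $\epsilon\in W_\prec$ such that for every $\delta\in V_\prec$ there is $x'\in X$ with $d(x',x)\prec\delta$ but $m(f(x'),f(x))\not\prec\epsilon$. Index a net by the directed set $V_\prec$ (directed downward, which is legitimate since $V_\prec$ is a filter): for each $\delta\in V_\prec$ pick such an $x'_\delta$. This net $d$-converges to $x$, hence $\tau_d$-converges to $x$; but $(f(x'_\delta))$ does not $m$-converge to $f(x)$, so by hypothesis (2) applied to the codomain it does not $\tau_m$-converge to $f(x)$ either, witnessing that $f$ is not topologically continuous.

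\medskip

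For $(1)\Rightarrow(2)$: argue contrapositively. Suppose $\tau_m$-convergence is strictly weaker than $m$-convergence, witnessed by a net $(y_i)_{i\in I}$ in $Y$ that $\tau_m$-converges to $y$ but does not $m$-converge to $y$. Form the continuity space whose underlying set is $X=I\cup\{\infty\}$ (with $\infty$ a fresh point, and $I$ directed), carrying the value distributive lattice $\Omega(X)$ and a distance $d$ chosen so that $\tau_d$ is exactly the topology in which a set containing $\infty$ is open iff it is eventually-in-$I$ co-final — concretely, use Flagg's construction from Theorem~\ref{thm:PtoTop} applied to that ``one-point directed-set compactification'' topology on $X$. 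Then the map $g:X\to Y$ sending $i\mapsto y_i$ and $\infty\mapsto y$ is topologically continuous (a basic neighbourhood of $y$ pulls back to an eventually-co-final set, which is open) but not $\epsilon$-$\delta$ continuous, since $(y_i)$ fails to $m$-converge to $y$ while the canonical net in $X$ $d$-converges to $\infty$. Hence (1) fails.

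\medskip

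The main obstacle is the $(1)\Rightarrow(2)$ direction: in the premetric case one simply used the convergent sequence $\omega+1$, but here an arbitrary net is indexed by an arbitrary directed set $I$, so one must supply a continuity space on $I\cup\{\infty\}$ realising the ``directed-set-convergence'' topology. Using $\Omega(X)$ together with Flagg's recipe from Theorem~\ref{thm:PtoTop} handles this uniformly, but one should double-check that the canonical net indexed by $I$ (or by $V_\prec$ in the other direction) genuinely $d$-converges and not merely $\tau_d$-converges — this is where the explicit form of $d$ matters and is the one point requiring care.
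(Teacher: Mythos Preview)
Your proposal is correct and matches the paper's intended approach: the paper gives no explicit proof of this lemma, simply presenting it as the net-analogue of the premetric version in Section~\ref{sec:Primer}, and your argument is precisely that analogue. Your final concern is legitimately raised but easily dispatched: for Flagg's construction every $B_\epsilon(x)$ is open (as the paper notes in proving that $\mathcal{O}$ is left adjoint), so on $I\cup\{\infty\}$ the notions of $d$-convergence and $\tau_d$-convergence coincide and the canonical net genuinely $d$-converges to $\infty$.
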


Mimicking the behaviour of premetric spaces and applying the arguments used in Corollary~\ref{cor:equiv} one can easily establish the following conditions on generalised premetric spaces.

   \begin{corollary}\label{cor:coincont} For any function $(X,V,d) \to (Y,W,m)$ between continuity spaces the following conditions imply that both types of continuity coincide. 
\medskip
  \begin{enumerate}
    \item For all $y\in Y$ and $\epsilon \succ 0_W$, $B_\epsilon^m(y)$ is open with respect to $\tau_m$.
    \medskip
  \item $\tau_m$-net limits are unique.
   \medskip
  \item $\tau_m$ is $T_2$.
  \end{enumerate}
  \end{corollary}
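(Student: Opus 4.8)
The plan is to route everything through the lemma just stated, which identifies ``$\epsilon$-$\delta$ continuity $=$ topological continuity for all maps into $(Y,W,m)$'' with ``$\tau_m$-net convergence $=$ $m$-net convergence''; since $m$-convergence of a net always entails $\tau_m$-convergence, in each case it suffices to prove the reverse implication for convergence. Condition (1) gives this at once: if every $B^m_\epsilon(y)$ is $\tau_m$-open, then a net converging to $x$ in $\tau_m$ is eventually inside the open neighbourhood $B^m_\epsilon(x)$ for each $\epsilon\succ 0_W$, hence $m$-converges to $x$. Condition (3) is subsumed by (2): a topological space is Hausdorff exactly when every net has at most one limit.

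The real work is (2), which I would prove by transcribing the argument of Corollary~\ref{cor:equiv}(3) from sequences to nets. Assume $\tau_m$-net limits are unique, suppose $(x_i)_{i\in I}\to_{\tau_m}x$ but $(x_i)\not\to_m x$, and fix $\epsilon\succ 0$ for which $J=\{\,i\in I\mid x_i\notin B_\epsilon(x)\,\}$ is cofinal; then the subnet $(x_i)_{i\in J}$ still converges to $x$ in $\tau_m$ and has range $A=\{x_i\mid i\in J\}$ disjoint from $B_\epsilon(x)$. Using uniqueness, no $y\in B_\epsilon(x)\smallsetminus\{x\}$ lies in $\overline{A}$: otherwise some subnet of $(x_i)_{i\in J}$ converges to $y$ and, being a subnet of a net converging to $x$, also converges to $x$, forcing $y=x$. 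So for each such $y$ pick a $\tau_m$-open $U_y\ni y$ with $U_y\cap A=\emptyset$ (note $x\notin U_y$, since $(x_i)_{i\in J}$ would otherwise be eventually in $U_y$). Then $O:=B_\epsilon(x)\cup\bigcup_{y\in B_\epsilon(x)\smallsetminus\{x\}}U_y$ is a $\tau_m$-open set containing $x$ with $O\cap A=\emptyset$, contradicting $(x_i)_{i\in J}\to_{\tau_m}x$. Hence $(x_i)\to_m x$, and the lemma yields (2).

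The step I expect to be the main obstacle is the claim that $O$ is $\tau_m$-open, equivalently that $x$ lies in its $\tau_m$-interior: the ball $B_\epsilon(x)$ need not itself be open, and its centre need not be interior to it, so one must argue that the neighbourhoods $U_y$ ``patch the boundary'' of $B_\epsilon(x)$ tightly enough to pull $x$ into the interior of the union, and this is precisely where uniqueness of limits must be invoked. A related subtlety, absent in the sequential case, is the passage from ``$y$ is a cluster point of the subnet $(x_i)_{i\in J}$'' to ``$y\in\overline{A}$'': a net lying in $A$ and converging to $y$ need not be a subnet of $(x_i)_{i\in J}$, so some care is needed to keep the cofinality bookkeeping intact. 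The cleanest way to package the content would be to prove directly that uniqueness of $\tau_m$-net limits forces every $B_\epsilon(x)$ to be a $\tau_m$-neighbourhood of its centre --- the precise reformulation of ``$\tau_m$-convergence implies $m$-convergence'' --- and it is this statement that I would ultimately aim to establish.
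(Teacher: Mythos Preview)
Your proposal follows exactly the route the paper takes: the paper's entire ``proof'' of this corollary is the sentence ``mimicking the behaviour of premetric spaces and applying the arguments used in Corollary~\ref{cor:equiv}'', and you have written out what that transcription to nets would look like, routed through the preceding lemma on coincidence of the two notions of convergence.

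One of your two stated worries is not an obstacle at all. The openness of $O=B_\epsilon(x)\cup\bigcup_{y}U_y$ does not require any ``patching of the boundary'' and does not use uniqueness of limits. By the very definition of $\tau_m$, a set is open precisely when around each of its points it contains some $\epsilon$-ball (not an \emph{open} ball, just an $\epsilon$-ball). For the point $x$ the inclusion $B_\epsilon(x)\subseteq O$ is the required witness, and every other point of $O$ lies in some open $U_y\subseteq O$. So $O$ is $\tau_m$-open automatically; uniqueness of limits is used only earlier, to produce the sets $U_y$.

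Your second worry is the real one, and it is a genuine gap in the straight transcription which the paper does not address either. From $y\in\overline{A}$ one gets a net in $A$ converging to $y$, but nothing forces it to be a subnet of $(x_i)_{i\in J}$, so uniqueness of limits cannot be applied directly. Passing to cluster points helps only partially: since $(x_i)_{i\in J}\to x$ and limits are unique, $y\neq x$ is not a cluster point of $(x_i)_{i\in J}$, yielding $U_y$ and $i_y\in J$ with $x_i\notin U_y$ for $i\ge i_y$; but this is weaker than $U_y\cap A=\emptyset$, and then one cannot conclude that the open set $O$ misses the whole subnet---each $U_y$ only eventually misses it, with thresholds $i_y$ that vary with $y$. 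In the sequential setting of Corollary~\ref{cor:equiv} this gap closes via a short case split (either the indices are unbounded in $M$, giving a genuine subsequence converging to both $y$ and $x$, or the sequence in $A$ assumes only finitely many values and a constant subsequence together with uniqueness forces $y\in A$, contradicting $A\cap B_\epsilon(x)=\emptyset$); the paper is silent on how, or whether, this step survives for arbitrary nets.
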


As a simple consequence of Corollary~\ref{cor:coincont} we have the following.

\begin{corollary} The functor $\mathcal{O}:{\bf P}\to{\bf Top}$ is a left adjoint.
\end{corollary}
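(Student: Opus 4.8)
The plan is to exhibit, for each topological space $(Y,\tau)$, a continuity space $R(Y,\tau)$ that is a universal (initial) object among continuity spaces mapping onto $(Y,\tau)$, thereby producing a right adjoint $R:{\bf Top}\to{\bf P}$ to $\mathcal{O}$. Concretely, I would set $R(Y,\tau)=(\Omega(\tau),Y,d)$ with the very premetric Flagg uses in the proof of Theorem~\ref{thm:PtoTop}, namely $d(x,y)=\{F\in[\tau]^{<\omega}\mid \text{for all }U\in F,\ x\in U\Rightarrow y\in U\}$. By that theorem $\mathcal{O}(R(Y,\tau))=(Y,\tau)$ on the nose, so the counit $\varepsilon:\mathcal{O}R\Rightarrow \mathrm{id}_{\bf Top}$ is the identity. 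The work is then to check the universal property: for every continuity space $(X,V,e)$ and every continuous map $f:\mathcal{O}(X,V,e)\to (Y,\tau)$, the same underlying function $f$ is an $\epsilon$-$\delta$ continuous map $(X,V,e)\to(\Omega(\tau),Y,d)$, and uniqueness of such a lift is automatic since the forgetful functor to ${\bf Set}$ is faithful.

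The key step is the lifting claim, and here Corollary~\ref{cor:coincont} does the heavy lifting: I would verify that the continuity space $(\Omega(\tau),Y,d)$ satisfies condition (1) of that corollary, i.e.\ each $\epsilon$-ball $B_\epsilon^d(y)$ with $\epsilon\succ 0_{\Omega(\tau)}$ is $\tau_d$-open. Since $\epsilon\succ 0$ in $\Omega(\tau)$ means $\epsilon$ is a finite downward-closed family of finite subsets of $\tau$, it is determined by finitely many finite sets $F_1,\dots,F_k$ of open sets, and a short computation (exactly the $k=1$ case appearing in the proof of Theorem~\ref{thm:PtoTop}, intersected over the finitely many $F_j$) shows $B_\epsilon^d(y)$ is a finite intersection of members of $\tau$, hence open. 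With (1) in hand, Corollary~\ref{cor:coincont} gives that for the codomain $(\Omega(\tau),Y,d)$ topological continuity and $\epsilon$-$\delta$ continuity coincide; since $f$ is continuous from $\mathcal{O}(X,V,e)$ to $(Y,\tau)=\mathcal{O}(\Omega(\tau),Y,d)$, it is $\epsilon$-$\delta$ continuous as a map $(X,V,e)\to(\Omega(\tau),Y,d)$, which is the required lift.

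It remains only to package this as an adjunction. With $\varepsilon=\mathrm{id}$ as counit, the unit $\eta_{(X,V,e)}:(X,V,e)\to R\mathcal{O}(X,V,e)$ is the identity function on the underlying set (which is $\epsilon$-$\delta$ continuous by the lifting claim applied to $f=\mathrm{id}_{\mathcal{O}(X,V,e)}$), and the triangle identities are trivial because every natural transformation involved is the identity on underlying sets and $U:{\bf P}\to{\bf Set}$ is faithful. Equivalently, one can simply invoke the universal-arrow formulation: the lifting claim says precisely that $\mathrm{id}:(Y,\tau)\to\mathcal{O}(R(Y,\tau))$ is a universal arrow from $\mathcal{O}$ to $(Y,\tau)$, which is equivalent to $\mathcal{O}$ having a right adjoint.

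I expect the main obstacle to be purely bookkeeping: carefully verifying that finite elements of $\Omega(\tau)$ really are coded by finitely many finite sets of open sets and that the corresponding $\epsilon$-ball is the expected finite intersection of open sets — i.e.\ getting the reverse-inclusion ordering of $\Omega(\tau)$ and the definition of $\prec$ to line up correctly with ``$d(x,y)\prec\epsilon$''. Everything downstream of the openness-of-balls check is formal, delegated to Corollary~\ref{cor:coincont} and faithfulness of $U$.
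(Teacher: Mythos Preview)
Your proposal is correct and follows essentially the same route as the paper: take $R(Y,\tau)=(\Omega(\tau),Y,d)$ with Flagg's distance, observe that every $\epsilon$-ball in this continuity space is $\tau$-open, and invoke Corollary~\ref{cor:coincont}(1) to conclude that every topologically continuous map into $(Y,\tau)$ is already $\epsilon$-$\delta$ continuous into $R(Y,\tau)$. The paper simply asserts the openness of balls as ``easily verified'' and stops at the universal-arrow formulation, whereas you spell out the finite-intersection computation and the unit/counit packaging; but the argument is the same.
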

\begin{proof} Given any topological space $(X,\tau)$, Theorem~\ref{thm:PtoTop} generates a {\bf P}-object $X_0=(X,\Omega(\tau),d)$ so that $\mathcal{O}(X_0) = (X,\tau)$. Moreover, one can easily verify that for all $x\in X$ and $\epsilon \succ 0$, $B_\epsilon(x) \in \tau$. Next, consider a continuity space $Y_0 = (Y,W,m)$ with a topologically continuous function $f: \mathcal{O}(Y_0) \to (X,\tau)$. From part (1) of the previous corollary we obtain that $f$ is also $\epsilon$-$\delta$ continuous and thus $\mathcal{O}$ is left adjoint.
\end{proof}

In view of the above result and the equivalence ${\bf M} \leftrightarrows {\bf Top}$ highlighted in Section~\ref{sec:intro} (see \cite{MR3334228}), it is clear that {\bf M} becomes a reflective subcategory of {\bf P}. This highlights a cohesive way in which to add the triangle inequality to any object in {\bf P}.

\begin{corollary} The category {\bf M} is a reflective subcategory of {\bf P}.
\end{corollary}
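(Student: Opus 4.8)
The plan is to show that the inclusion functor $\iota:{\bf M}\hookrightarrow{\bf P}$ has a left adjoint, which is precisely what it means for ${\bf M}$ to be a reflective subcategory of ${\bf P}$. Since the excerpt establishes the adjunction $\mathcal{O}:{\bf P}\leftrightarrows{\bf Top}:\iota_{\bf M}\circ M$ implicitly via the corollaries (namely $\mathcal{O}:{\bf P}\to{\bf Top}$ is left adjoint) together with the Weiss equivalence $M:{\bf M}\leftrightarrows{\bf Top}:\mathcal{O}$, the natural approach is to compose adjunctions. Concretely, I would write $R$ for the right adjoint of $\mathcal{O}:{\bf P}\to{\bf Top}$ (so $R$ sends $(X,\tau)$ to $(X,\Omega(\tau),d)$), and observe that $\mathcal{O}$ restricted to ${\bf M}$ is (one half of) the Weiss equivalence, hence an equivalence ${\bf M}\to{\bf Top}$. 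The candidate reflector is then $M\circ\mathcal{O}\hurp_{\bf M}$... more precisely the composite ${\bf P}\xrightarrow{\mathcal{O}}{\bf Top}\xrightarrow{M}{\bf M}$, and I would check it is left adjoint to the inclusion ${\bf M}\hookrightarrow{\bf P}$.

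First I would record that the inclusion ${\bf M}\hookrightarrow{\bf P}$ factors as ${\bf M}\xrightarrow{M^{-1}\text{-side}}{\bf Top}\xrightarrow{R}{\bf P}$ up to natural isomorphism: an object of ${\bf M}$ is a continuity space satisfying the triangle inequality, and by the Weiss equivalence it is determined by the topological space it generates, $\mathcal{O}$; applying $R=$ (the right adjoint from the last corollary) and using that ${\bf M}$-objects satisfy condition (1) of Corollary~\ref{cor:coincont} (their $\epsilon$-balls are open) one checks the unit $A\to R\mathcal{O}(A)$ is an isomorphism in ${\bf M}$ — this is essentially the content of the Weiss equivalence $\mathcal{O}\circ M\cong\mathrm{id}$ and $M\circ\mathcal{O}\cong\mathrm{id}$. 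Then for any ${\bf P}$-object $Y_0=(Y,W,m)$ and any ${\bf M}$-object $A$, the chain of natural bijections
\[
{\bf M}(M\mathcal{O}(Y_0),A)\;\cong\;{\bf Top}(\mathcal{O}(Y_0),\mathcal{O}(A))\;\cong\;{\bf P}(Y_0,R\mathcal{O}(A))\;\cong\;{\bf P}(Y_0,\iota A)
\]
gives the desired adjunction, where the first bijection is the Weiss equivalence, the second is the adjunction of the last corollary, and the third uses that $\iota A\cong R\mathcal{O}(A)$ in ${\bf P}$. Naturality in both variables is inherited from the three constituent naturalities.

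The step I expect to be the main obstacle — or at least the one requiring genuine care rather than formal nonsense — is verifying that for an ${\bf M}$-object $A$ the canonical comparison $\iota A \to R\mathcal{O}(A)$ is an isomorphism in ${\bf P}$, i.e.\ that passing from a $V$-space satisfying the triangle inequality to the $\Omega(\tau)$-space built from its generated topology does not change the object up to ${\bf P}$-isomorphism. This is exactly where the triangle inequality is used (via Corollary~\ref{cor:coincont}(1), that $\epsilon$-balls of ${\bf M}$-objects are $\tau_m$-open, so that $\epsilon$-$\delta$ continuity out of $\iota A$ and out of $R\mathcal{O}(A)$ agree), and it is the reason the reflection lands in ${\bf M}$ and not merely in ${\bf P}$. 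I would either cite the relevant half of Weiss's equivalence \cite{MR3334228} directly for this identification, or spell out that the identity map $X\to X$ is $\epsilon$-$\delta$ continuous in both directions between $(X,V,d)$ and $(X,\Omega(\tau_d),d')$ using openness of balls. Everything else — composing the bijections, checking naturality — is routine.
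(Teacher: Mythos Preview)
Your approach is correct and matches the paper's: the paper gives no proof beyond the one-line observation that the result follows from the preceding corollary ($\mathcal{O}:{\bf P}\to{\bf Top}$ is left adjoint) together with the Weiss equivalence ${\bf M}\leftrightarrows{\bf Top}$. You supply exactly the details the paper omits, and you correctly isolate the only nontrivial point --- that for an ${\bf M}$-object $A$ the comparison $\iota A\to R\mathcal{O}(A)$ is a ${\bf P}$-isomorphism --- which indeed follows from Corollary~\ref{cor:coincont}(1) as you indicate.
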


\section{Bicompletenness of {\bf P}}\label{sec:cont}

 Let $U:{\bf P}\to{\bf Set}$ denote the forgetful functor: we start this section by proving that any set-indexed cone has a $U$-initial lift. In a sense, $U$-initial lifts are analogous to initial topologies. In light of this, $\epsilon$-$\delta$ continuity being stronger than topological continuity can be interpreted as `topological continuity imposes more restrictions on initial lifts in {\bf Top} than $\epsilon$-$\delta$ continuity does in {\bf P}' (i.e., there are {\it more} of the latter than the former). Inevitably, the functor $\mathcal{O}$ in general does not preserve limits. In fact, it is simple to show that limits in {\bf P} are mapped to topologies at least as fine as their corresponding limits in {\bf Top}. In contrast, recall that the coincidence of both types of continuity is solely due to the codomain of a function. This fact lies at the very heart of why $\mathcal{O}$ is left adjoint and thus preserves colimits.

\subsection{Limits}
 In order to lighten the notational burden, in what follows we will suppress the subscript dummy indexing in the product notation. For instance, $\prod_{j \in J} V_j$ will become  $\prod V_j$ (where the indexing set will be understood from context). Also, when faced with a collection $\{V_j\}_{j\in J}$ of value distributive lattices, their bottom and top elements will be denoted by $\bot_j$ and $\top_j$, respectively - thus suppressing the `$V$' from their indices. 

\begin{theorem}\label{thm:topological} Any set-indexed cone $(f_j:X\to U[(X_{j}, V_j,d_j)])_{j\in J}$ has a $U$-initial lift.
\end{theorem}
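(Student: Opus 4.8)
The plan is to construct the initial lift explicitly, mimicking the construction of an initial topology but working at the level of continuity spaces. Given the cone $(f_j : X \to U[(X_j, V_j, d_j)])_{j \in J}$, I would first form the value distributive lattice $V = \prod V_j$, taken with the coordinatewise order. I need to know that a product of completely distributive lattices is again completely distributive (this is standard, since complete distributivity is preserved by products of complete lattices) and that $V_\prec$ is a filter. For the latter, the key point is that in a product, $p \succ 0$ holds if and only if $p$ is well above $0$ in each coordinate \emph{and} $p$ differs from $\top$ in only finitely many coordinates; so $V_\prec$ is the filter generated by the ``finitely supported'' elements whose nonzero coordinates are each in $(V_j)_\prec$. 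I would verify this characterisation of $\succ$ in $\prod V_j$ as a preliminary lemma, since it is exactly what makes $\Omega(X)$ (and hence products of such) behave correctly — this is why the free-locale construction $\Omega$ is advertised as ``key for developing (co)limits.'' Actually, to handle the case where $J$ is large, it may be cleanest to replace the full product by the sublattice of finitely supported elements, or to work with the lattice $\prod V_j$ but be careful that $V_\prec$ is the filter I just described.

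Next I would define the premetric $d : X^2 \to V$ by $d(x,x')_j = d_j(f_j(x), f_j(x'))$, i.e.\ the ``diagonal'' of the pullbacks of the $d_j$'s. Clearly $d(x,x) = 0$ since each $d_j(f_j(x),f_j(x)) = 0$, so $(X, V, d)$ is a legitimate continuity space. The candidate initial lift is $(X,V,d)$ together with the maps $f_j$. I would then check that each $f_j : (X,V,d) \to (X_j,V_j,d_j)$ is $\epsilon$-$\delta$ continuous: given $x \in X$ and $\epsilon \in (V_j)_\prec$, take $\delta \in V_\prec$ to be the element that is $\epsilon$ in coordinate $j$ and $\top_i$ in every other coordinate; then $d(x',x) \prec \delta$ forces $d_j(f_j(x'),f_j(x)) \prec \epsilon$ by the coordinatewise description of $\succ$. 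Here I must double-check that this $\delta$ really is in $V_\prec$ — it has finite support (a single coordinate), and its one nontrivial coordinate is $\epsilon \in (V_j)_\prec$, so it is, using the filter characterisation above.

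The heart of the proof is the universal property: for any continuity space $(Z, W, e)$ and any function $g : Z \to X$ such that each composite $f_j \circ g : (Z,W,e) \to (X_j,V_j,d_j)$ is $\epsilon$-$\delta$ continuous, I must show $g : (Z,W,e) \to (X,V,d)$ is itself $\epsilon$-$\delta$ continuous. Fix $z \in Z$ and $\epsilon \in V_\prec$. By the filter characterisation, $\epsilon$ lies above some finitely supported element whose nonzero coordinates $j_1,\dots,j_n$ carry values $\epsilon_{j_k} \in (V_{j_k})_\prec$. For each $k$, $\epsilon$-$\delta$ continuity of $f_{j_k} \circ g$ yields $\delta_k \in W_\prec$ with $e(z',z) \prec \delta_k \Rightarrow d_{j_k}(f_{j_k}(g(z')), f_{j_k}(g(z))) \prec \epsilon_{j_k}$. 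Set $\delta = \delta_1 \wedge \cdots \wedge \delta_n$, which is in $W_\prec$ since $W_\prec$ is a filter. Then $e(z',z) \prec \delta$ forces $d(g(z'),g(z))_{j_k} \prec \epsilon_{j_k}$ for all $k$ and (trivially) $d(g(z'),g(z))_i \preceq \top_i$ elsewhere, whence $d(g(z'),g(z)) \prec \epsilon$ by the coordinatewise/finite-support description of $\succ$ once more. Uniqueness of the lift on the underlying set is automatic since $U$ is faithful and $g$ is determined as a function. The main obstacle, and the place where care is genuinely required, is precisely the behaviour of the well-above relation $\succ$ in the product lattice $\prod V_j$ — in particular the ``finite support'' phenomenon, which is what forces $V_\prec$ to be a filter and is what makes the $\wedge$ of finitely many $\delta$'s suffice. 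Everything else is a routine transcription of the initial-topology argument into the $\epsilon$-$\delta$ language.
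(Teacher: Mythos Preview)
There is a genuine gap in the proposal: your candidate lattice $V=\prod V_j$ is \emph{not} a value distributive lattice, and your ``preliminary lemma'' characterising $p\succ 0$ in the product is false. The paper addresses exactly this point. Take $L\times L$ for any nontrivial value distributive lattice $L$: the elements well above $0$ are precisely those of the form $(\top_L,a)$ or $(b,\top_L)$ with $a,b\succ 0$, but the meet $(\top_L,a)\wedge(b,\top_L)=(b,a)$ is \emph{not} well above $0$ --- witness $S=\{(\top_L,\bot_L),(\bot_L,\top_L)\}$, which has $\bigwedge S=0$ yet contains no element below $(b,a)$. So $V_\prec$ fails to be closed under binary meets already when $|J|=2$; your hedge about ``replacing the full product by the finitely supported sublattice'' for large $J$ does not help, since the obstruction is present for finite $J$. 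Your stated characterisation (``$p\succ 0$ iff each $p_j\succ 0$ and only finitely many $p_j\neq\top_j$'') would declare $(b,a)$ to be well above $0$, which it is not.

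The paper's fix is to keep the coordinatewise premetric $d$ on $\prod V_j$ but then order-embed $\prod V_j$ into the free-locale lattice $V:=\Omega(U)$ with $U=\prod_f(V_j)_\prec$, via $x\mapsto x_\uparrow=\{A\in[U]^{<\omega}\mid A\subseteq x^\uparrow\}$, and set $m=\phi\circ d$. In $\Omega(U)$ the well-above-zero elements are exactly the finite downward-closed families, and these \emph{do} form a filter; this is precisely where the $\Omega$ construction earns its keep. Once you work in $\Omega(U)$ rather than $\prod V_j$, the remainder of your argument --- continuity of each $f_j$ via a single-coordinate $\overline{\epsilon}=\{\emptyset,\{\hat\epsilon\}\}$, and the universal property by taking a finite meet of $\delta_t$'s in $W_\prec$ --- goes through essentially as you wrote it and matches the paper's proof.
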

\begin{proof} Fix any such cone $(f_j:X\to U[(X_{j}, V_j,d_j)])_{j\in J}$ and notice that for each $j \in J$ one can construct $(X, V_j, m_j)$ where $m_j(x,y) = d_j(f_j(x),f_j(y))$ and, thus, endow each $f_j$ with $\epsilon$-$\delta$ continuity. Next we construct a value distributive lattice $V$ (based on all $V_j$'s) and $m:X^2 \to V$ (based on all $d_j$'s) making each $f_j$ $\epsilon$-$\delta$ continuous (in addition to the usual cohesion properties of initial lifts). First notice that letting $V = \prod(V_{j})$ and $m(x,y) \in V$ so that $\pi_j\circ m(x,y) = m_j(x,y)$ does turn each function $f_j$ into an $\epsilon$-$\delta$ continuous function. However, $\prod(V_{j})$ is not
value distributive (the well-above elements do not form a filter). Indeed, take a product $L \times L$ of a value distributive lattice $L$. Its well-above zero elements are of the form $(\top_L,a)$ and $(b,\top_L)$ where $a, b\succ \bot_L$. Thus, their meet $(\top_L,a) \wedge (b,\top_L) = (b,a)$ is not well-above zero in $L\times L$ unless either $a$ or $b = \top_L$ . In order to fix this, we order-embed $\prod V_j$ into a suitable value distributive lattice $V$ and define $m:X^2 \to V$ accordingly. Recall that for any lattice $L$, the set $L_\prec:=\{a\in L \mid a\succ 0\}$. Let $U:=\prod_{f}(V_{j})_{\prec}$; $a\in U$ implies that for only finitely many $j \in J$, $a_j\not=\top_j$. Put $V := \Omega(U)$. The injection $\phi: \prod V_j \to V$ is defined as follows: for a given $ x\in\prod V_j$
let 
\[
\phi(x) = x_{\uparrow}=\{A\in[U]^{<\omega}\mid A\subseteq x^{\uparrow}\}
\]

\noindent
and $x^{\uparrow}=\{a\in U\mid \forall j\in J, a_{j}\succ x_{j}\}$. 
Notice that since all $V_{j}$ are completely distributive lattices
then for any $x\in\prod V_{j}$, $x^{\uparrow}$ uniquely
determines $x$. Consequently, we have $\bigwedge(\cup x_{\uparrow})=\bigwedge(x^{\uparrow})=x$
in $\prod V_{j}$.\\

\smallskip

\noindent
CLAIM:
the function $ \phi:\prod V_{j}\rightarrow V$
where $x\mapsto x_{\uparrow}$ is an order-embedding.

\begin{proof}
Take $x=(x_{j})$ and $y=(y_{j})$ in $\prod V_{j}$ so that
$x\not=y$. Notice that 
\[\bigwedge(\cup x_{\uparrow})=\bigwedge{\{a\in\cup x_{\uparrow}\mid a_{i}\geq x_{i}\}}=x\not=y=\bigwedge{\{a\in\cup y_{\uparrow}\mid a_{i}\geq y_{i}\}}=\bigwedge(\cup y_{\uparrow})\]
and, hence, that $\phi$ is injective. Also, if $x>y$ then clearly
$x_{\uparrow}\subset y_{\uparrow}$ and $x_{\uparrow}>y_{\uparrow}$.
\end{proof}

\noindent
Next, we define $m:X^2\rightarrow V$ as follows: for $x, y \in X$ let $d(x,y) \in \prod V_j$ so that 
\[\pi_j(d(x,y)) = d_j(x,y)
\]
 and
\[
m(x,y)=\phi\circ d(x,y).\\
\]

\smallskip

\noindent
CLAIM: for each $j\in J$, $f_j: (X,V,m) \to (X,V_j,d_j)$ is $\epsilon$-$\delta$ continuous. 
\begin{proof} Fix an $i \in J$ and let $\epsilon \succ \bot_i$. Let $\hat{\epsilon} \in  \prod_{f}(V_{j})_{\prec}$ so that $\pi_j(\hat{\epsilon}) = \top_j$ when $j\not = i$ and $\pi_j(\hat{\epsilon}) = \epsilon$, otherwise. Notice that $\overline{\epsilon} = \{\emptyset, \{\hat{\epsilon}\}\}\succ \bot_V$ and that $m(x,y) \prec \overline{\epsilon} \Rightarrow d_i(x,y) \prec \epsilon$. Thus, $f_i$ is as claimed.
\end{proof}

Lastly, choose any continuity space $(Z,W,s)$ and function $f:Z\to X$, and assume that all compositions $h_j:=f_j\circ f$ are $\epsilon$-$\delta$ continuous. Choose any $z\in Z$ and $\epsilon \succ \bot_V$ and recall that the latter is equivalent to $|\epsilon|\in \omega$. In particular, $|\cup \epsilon| \in \omega$ also. By construction, for each $p\in \cup \epsilon$ only finitely many projections onto their respective value distributive lattices are  different from the largest element of the given lattice at the given coordinate. Let 
$$k = \{t\mid \exists p \in \cup \epsilon \text{ and }j\in J, \pi_j(p)=t < \top_j\}.$$ 
\noindent
For the chosen $z$ and each $t\in k$ there exists a $\delta_t$ so that $s(x,y) \prec \delta_t \Rightarrow d_j(h_j(x),h_j(y)) \prec t$ (since each $f_j$ is $\epsilon$-$\delta$ continuous), where $t = \pi_j(p)$ for some $j\in J$ and $p \in \cup \epsilon$. Since $W$ is a value distributive lattice, then $\delta: = \bigwedge_{t \in k} \delta_t \succ 0_W$ and one can easily verify that $s(x,y) \prec\delta \Rightarrow m(f(z),f(y)) \prec \epsilon$.
\end{proof}

Forging products and equalisers in {\bf P} follows directly from the construction illustrated in Theorem~\ref{thm:topological}. In sum, we have the following.

\subsubsection{Products}
Take an arbitrary collection of continuity spaces $\{(X_{j}, V_j,d_j)\}$,
where $j\in J$, let $X=\prod X_{j}$ and $U=\prod_{f}(V_{j})_{\prec}$,
so that if $a\in U$ then for only finitely many $i \in J$, $a_i\not=1_i$ and let $V= \Omega(U)$. The injection $\phi: \prod V_j \to V$ is defined as follows: for a given $ x\in\prod V_j$
let 
$$
\phi(x) = x_{\uparrow}=\{A\in[U]^{<\omega}\mid A\subseteq x^{\uparrow}\}
$$
and $x^{\uparrow}=\{a\in U\mid \forall j\in J, a_{j}\succ x_{j}\}$. For $x=(x_{j}),y=(y_{j})\in X$ let $d(x,y) \in \prod V_j$ so that $\pi_j(d(x,y)) = d_j(x_j,y_j)$ and
\[
m(x,y)=\phi\circ d(x,y).
\]
\noindent
\begin{corollary} For a set-indexed collection of continuity spaces $\{(X_{j}, V_j,d_j)\mid j\in J\}$ we have $\prod_{\bf P}  (X_j,V_j,d_j) = ((X,V,m), X \to X_i)$ with $X$, $V$ and $m$ as illustrated above.
\end{corollary}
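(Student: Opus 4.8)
The plan is to obtain the product as a direct corollary of Theorem~\ref{thm:topological} by specialising the $U$-initial lift construction to the cone of projection maps. First I would form the discrete cone $(\pi_j : X \to X_j)_{j\in J}$ where $X = \prod X_j$ and $\pi_j$ is the $j$-th coordinate projection; by Theorem~\ref{thm:topological} this cone has a $U$-initial lift, and inspecting the proof shows the lift is exactly $(X,V,m)$ with $U = \prod_f (V_j)_\prec$, $V = \Omega(U)$ and $m = \phi \circ d$ as displayed above (here $m_j(x,y) = d_j(\pi_j(x),\pi_j(y)) = d_j(x_j,y_j)$, which is the only place the choice of cone enters). So the content of the corollary is really just that a $U$-initial lift of the projection cone \emph{is} the categorical product in $\bf P$.

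Second I would verify the universal property directly from $U$-initiality. Given a continuity space $(Z,W,s)$ together with $\epsilon$-$\delta$ continuous maps $g_j : (Z,W,s) \to (X_j,V_j,d_j)$, the underlying set functions $g_j : Z \to X_j$ induce a unique set map $g : Z \to X$ with $\pi_j \circ g = g_j$, namely $g(z) = (g_j(z))_{j\in J}$; uniqueness of the mediating morphism at the level of underlying sets is immediate. It remains to check $g$ is $\epsilon$-$\delta$ continuous as a map $(Z,W,s) \to (X,V,m)$, and this is precisely the final paragraph of the proof of Theorem~\ref{thm:topological}: since $U$ is faithful and the lift is $U$-initial, a set map into $X$ is a $\bf P$-morphism whenever all its composites with the $\pi_j$ are, which holds here by hypothesis. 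Thus $((X,V,m),(\pi_j))$ satisfies the defining property of $\prod_{\bf P}(X_j,V_j,d_j)$.

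I would also remark, for completeness, that $\mathcal{O}$ of this product is at least as fine as the product topology on $\prod \mathcal{O}(X_j,V_j,d_j)$ — consistent with the general statement that limits in $\bf P$ map to topologies at least as fine as the corresponding limits in $\bf Top$ — but this is not needed for the corollary itself.

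The only genuine obstacle is bookkeeping rather than mathematics: one must be careful that the $V$, $U$ and $m$ written in the statement of the corollary coincide with the ones produced by Theorem~\ref{thm:topological} when the cone is the projection cone, in particular that $\pi_j(d(x,y)) = d_j(x_j,y_j)$ matches the $m_j(x,y) = d_j(f_j(x),f_j(y))$ of the theorem under $f_j = \pi_j$. Once that identification is made, nothing further is required — the existence of $U$-initial lifts already does all the work, and there is no circularity since Theorem~\ref{thm:topological} is proved independently of any product construction.
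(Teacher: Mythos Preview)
Your proposal is correct and follows exactly the paper's own route. The paper gives no separate proof of this corollary at all---it simply records that products ``follow directly from the construction illustrated in Theorem~\ref{thm:topological}'' and then writes out the specialisation of that construction to the projection cone; your write-up just makes explicit the standard passage from $U$-initial lifts to categorical limits that the paper leaves implicit.
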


\subsubsection{Equalisers} These represent the simplest of all constructions. Take a pair of continuity spaces $(V,X, d_X)$ and $(Y,W,d_Y)$ with
$\epsilon$-$\delta$ continuous $f,g : (X,V,d_X) \rightrightarrows (Y,W,d_Y)$. The
equaliser is simply $Z=\{x\in X\mid f(x)=g(x)\}$ with $V$ and $d_{Z}:X\times X\rightarrow V$ as the restriction of $d_X$ onto $Z$. The inclusion function $Z \to X$ is clearly $\epsilon$-$\delta$ continuous.

\subsection{Colimits} 

In parallel with limits, we first prove that any set indexed cocone has a $U$-final lift and concurrently expose coproducts and coequalisers as part of the proof. The construction of these colimits in $\bp$ requires of more delicate arguments than with limits. This is particularly true for coequalisers.

\begin{theorem}\label{thm:cotopological} Any set-indexed cocone $( f_j:U[(X_{j}, V_j,d_j)] \to X)_{j\in J}$ has a $U$-final lift.
\end{theorem}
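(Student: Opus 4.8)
The plan is to construct a $U$-final lift directly, dualising the strategy of Theorem~\ref{thm:topological} but being careful about the fact that $\epsilon$-$\delta$ continuity constrains the domain of each $f_j$ rather than its codomain. Given the cocone $(f_j : U[(X_j,V_j,d_j)] \to X)_{j\in J}$, I would first reduce the problem: the target set is $X$, and a morphism out of the lifted object $(X,V,m)$ to some $(Z,W,s)$ is $\epsilon$-$\delta$ continuous iff each precomposite $(X_j,V_j,d_j)\to(Z,W,s)$ is. So the lift must make every $f_j$ $\epsilon$-$\delta$ continuous while being as ``large'' as possible in the ordering of continuity structures on $X$. The natural candidate is to build $V$ from the $V_j$'s again via an $\Omega$-construction on a product of the positive cones, but now the value $m(x,y)$ must be defined so that it records, for every $j$ and every pair $(a,b)\in X_j^2$ with $f_j(a)=x$, $f_j(b)=y$, the value $d_j(a,b)$; since there may be many such preimage pairs one takes a join (i.e.\ an intersection of downward-closed sets) over all of them, with the convention that if $x$ or $y$ is not in the image of some $f_j$ that coordinate contributes the bottom (no constraint). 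Concretely I expect $m(x,y)$ to be something like the element of $\Omega(U)$ generated by $\bigvee_{j}\bigvee_{f_j(a)=x,\,f_j(b)=y} \phi\circ d_j(a,b)$, with the join taken in $V$ and with $m(x,x)=0$ forced.

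The verification then splits into the usual three steps. First, check $(X,V,m)$ is a legitimate continuity space: $V=\Omega(U)$ is a value distributive lattice by the lemma on $\Omega$, and $m(x,x)=0$ holds by fiat (or because $d_j(a,a)=0$ for every $j$ and preimage $a$, so the join is $0$). Second, check each $f_j : (X_j,V_j,d_j)\to(X,V,m)$ is $\epsilon$-$\delta$ continuous: fix $a\in X_j$ and $\epsilon\succ \bot_V$, so $\epsilon$ is a finite downward-closed subset of $[U]^{<\omega}$; unwinding, $m(f_j(a),f_j(b))\prec\epsilon$ will be implied by a finite meet of conditions of the form $d_i(a',b')\prec t$ over finitely many coordinates $i$ and finitely many elements $t$, and since $f_j(a)$ lies only in the image of coordinates we can control, choosing $\delta$ in $V_j$ small enough handles it — this is essentially the computation in the second CLAIM of Theorem~\ref{thm:topological} run in reverse, using that the join defining $m$ includes the term coming from the diagonal preimage pair $(a,b)$ itself. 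Third, and this is the cohesion/universal property: given $(Z,W,s)$ and $g:X\to Z$ with every $g\circ f_j$ $\epsilon$-$\delta$ continuous, show $g$ is $\epsilon$-$\delta$ continuous from $(X,V,m)$; here one fixes $x\in X$ and $\epsilon\succ 0_W$, pulls back along the $f_j$'s to get $\delta_j\succ 0$ in each $V_j$ witnessing continuity of $g\circ f_j$ at the various preimages of $x$, and assembles these into a single $\delta\succ \bot_V$ — legitimate because $V_\prec$ is a filter and only finitely many coordinates are active in any given $\epsilon$.

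Along the way the theorem is meant to yield coproducts and coequalisers as special cases, so I would record those explicitly: for coproducts the $f_j$ are the canonical injections of a disjoint union, preimage fibres are singletons, and the join collapses to the ``glued'' structure already described for $\mathbf{Pre}$ coproducts (generalised to the $\Omega$-setting); for coequalisers $X$ is the quotient of a single $X_j$ by the relation identifying $f(x)$ with $g(x)$, and $m(\bar x,\bar y)$ becomes the join of $d_j$ over all representatives — this is where the construction genuinely does more work than in $\mathbf{Pre}$, since in $\mathbf{Pre}$ coequalisers failed to exist (the Fr\'echet Fan obstruction), and the point is that enlarging the value lattice to $\Omega(U)$ provides exactly the room needed to take that join without leaving the category.

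The main obstacle I anticipate is the third step, the universal property, specifically the passage from the finitely many local moduli $\delta_j$ (one for each active coordinate and each relevant preimage point of $x$ under each $f_j$) to a single $\delta\succ\bot_V$ that works simultaneously: one has to argue that only finitely many $(j,\text{preimage},t)$ triples are relevant to a given $\epsilon$ — which follows from $\epsilon\succ\bot_V$ meaning $|\epsilon|<\omega$ and each element of $\bigcup\epsilon$ having only finitely many non-top coordinates, exactly as in Theorem~\ref{thm:topological} — and then that the meet of the corresponding $\delta$'s is still $\succ 0$ in $V$, which is precisely where the filter property of $V_\prec$ (i.e.\ that $V$ is a value distributive lattice) is used. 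A secondary subtlety is making sure the definition of $m$ via a join over preimage pairs is well-posed and that the join actually exists in $\Omega(U)$ — but complete distributivity of $\Omega(U)$ makes arbitrary joins available, so this should be routine once the indexing is set up carefully.
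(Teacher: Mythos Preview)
Your dualisation of Theorem~\ref{thm:topological} has a genuine gap, and the Fr\'echet Fan---which you yourself invoke---is exactly where it fails. You build $V=\Omega(U)$ with $U$ a product of the positive cones $(V_j)_\prec$ indexed by $J$; an element $\delta\succ\bot_V$ then controls only finitely many \emph{coordinates of $J$}. But in your third step what you actually need is control over the possibly infinitely many \emph{preimage points} of a fixed $x\in X$ under a single $f_j$. Continuity of $g\circ f_j$ at each preimage $a\in f_j^{-1}(x)$ yields a modulus $\delta_a\in (V_j)_\prec$ that depends on $a$, and no element of your $V_\prec$ can encode infinitely many such moduli at once. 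Take $|J|=1$, let $X_1$ be the $\mathbf{P}$-coproduct of countably many convergent sequences, and let $X$ be the quotient identifying all limit points: the point $\infty$ has countably many preimages, the required $\delta_a$'s may tend to $0$, and no finite datum in $\Omega\bigl((V_1)_\prec\bigr)$ handles them all. Your sentence ``only finitely many $(j,\text{preimage},t)$ triples are relevant'' is exactly the unjustified step.

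The paper's construction is genuinely different, not a dualisation of the initial-lift argument. For a single $f:U[(Y,V,d)]\to X$ it sets $M=(V_\prec)^{Y}$---all functions from the \emph{points of $Y$} into $V_\prec$---and $W=\Omega(M)$; a function $h\in M$ may prescribe a different threshold $h(y)$ at each $y\in Y$, which is precisely the room needed to package the per-preimage $\delta_a$'s into a single $\delta\succ\bot_W$. The distance $m(x,y)$ is then defined not by a lattice operation over preimage pairs but by declaring which finite sets $A\subseteq M$ admit a finite \emph{zigzag path} in $Y$ from the fibre of $x$ to the fibre of $y$, alternating $h$-small steps with fibre identifications. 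This makes $f$ $\epsilon$-$\delta$ continuous (take $\delta=\bigwedge_{h\in\cup\epsilon}h(a)$) and yields the universal property; the general cocone is handled by first forming the coproduct of the $(X_j,V_j,d_j)$ and then applying this ``final continuity space'' construction to the induced map $\bigsqcup X_j\to X$. Incidentally, your join-over-preimage-pairs goes in the wrong direction even before the indexing issue: join in $\Omega(U)$ is intersection, hence the \emph{supremum} of distances, so $m(f_j(a),f_j(b))\geq\phi(d_j(a,b))$ and $f_j$ would not be $\epsilon$-$\delta$ continuous.
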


\begin{proof} In much the same spirit as with Theorem~\ref{thm:topological}, this proof is naturally broken up into two major parts. We first fix a set-indexed $( f_j:U[(X_{j}, V_j,d_j)] \to X)_{j\in J}$ and for any $j\in J$ we construct a continuity space $(X,W_j,m_j)$ rendering $f_j$ $\epsilon$-$\delta$ continuous. En route to achieving this we develop coequalisers. Secondly, based on the collection $\{(X,W_j,m_j)\mid j\in J\}$ we then construct the $U$-final lift $(X,W,m)$ and in passing we develop coproducts.

Consider a function $f:U[(Y,V,d)]\to X$: in what follows we construct the equivalent of the final topology on $X$ as a continuity space $(X,W,m)$ where the value distributive lattice $W$ is based on 

\[
\left[(V)_\prec\right]^{Y} = \{ \text{all functions }h: Y \to (V)_\prec\}
\]

\medskip
\noindent
 and a distance assignment $m: X^2 \to W$ based on finite paths in $Y$ (i.e., finite sequences $x_1, \ldots, x_n$ in $Y$) and show that $f: (Y, V,d)\to(X,W,m)$ is $\epsilon$-$\delta$ continuous. For further reference and in line with topology, let us denote this construction as the \emph{final continuity space} for a function $f:U[(Y,V,d)]\to X$. Put $M = \left[(V)_\prec\right]^{Y}$ and let $W = \Omega\left(M\right)$.

For $A \in\left[M\right]^{<\omega}$ and a pair $a,b \in f(X)$ say that $A$ \emph{admits} $(a,b)$ if for all $h\in A$ there exists a finite sequence $x_1, \ldots, x_n$ in $Y$ so that:
\begin{itemize}
\item $f(a) = f(x_1)$ and $f(b) = f(x_n) $,
\item for $i$ odd, $x_{i+1}\in B^{d}_{h(x_i)}(x_i)$, and 
\item for $i$ even $f(x_{i})= f( x_{i+1})$.
\end{itemize}

\medskip
\noindent
 Next, define $m: X^2 \to W$ as follows: for all $x,y\in X$ let

\[
m(x,y) = \begin{cases}
\{ A \in\left[M\right]^{<\omega}\mid A \text{ admits } (x,y)\} & \text{if } x,y\in f(Y),\\
\top_{W} & \mbox{otherwise.}
\end{cases}
\]
Clearly, for all $x,y \in X$ we have  $m(x,y) \in W$ and points in $X\smallsetminus f(Y)$ are as far from all other points (and vice versa) as possible and generate the discrete topology on $X\smallsetminus f(Y)$. Let $\epsilon \succ \bot_{W}$ and let $x \in Y$. Define
\[
\delta = \bigwedge_{h\in \cup \epsilon} h(x)
\]
and notice that since $\cup \epsilon$ is finite and each $h(x) \succ \bot_{W}$ then $\delta \succ \bot_{W}$. By design, $f\left[B^{d}_\delta(x)\right] \subseteq B_\epsilon^{m}(f(x))$ and notice that the inverse image of the latter is a saturated open set. Moreover, the reader can easily verify that given any function $g:X\to U(Z_0)$ so that $g\circ f: (Y,V,d) \to Z_0$ is $\epsilon$-$\delta$ continuous, then it must be that $g: (X,W,m) \to Z_0$ is also. Consequently, when $X = f(Y)$ the final continuity space on $X$ is equivalent to the coequaliser on $Y$ with the equivalence on $Y$ by $x \sim y$ precisely when $f(x) = f(y)$.

In light of this, given a set-indexed cocone $( f_j:U[(X_{j}, V_j,d_j)] \to X)_{j\in J}$ for each $j\in J$ we can apply the final continuity space construction by letting $X_j = Y$, $V=V_j$ and $d=d_j$ to obtain a collection of continuity spaces $(X,W_j,m_j)$ making their respective $f_j$'s $\epsilon$-$\delta$ continuous. It is important to keep in mind that, as illustrated above, for each $j\in J$ the continuity space $(X,W_j,m_j)$ is the $U$-final lift of $ f_j:U[(X_{j}, V_j,d_j)] \to X$. We start the second part of this proof: we construct the $U$-final lift, $(X,V,m)$, of $( f_j:U[(X_{j}, V_j,d_j)] \to X)_{j\in J}$ based on the collection $\{(X,W_j,m_j)\mid j\in J\}$. First we construct the coproduct $(Y,V,d)$ of the collection $\{(X_{j}, V_j,d_j)\mid j\in J\}$ and later apply the final continuity space to the obvious function $Y \to X$. Put 
\[
{\displaystyle N=\bigsqcup_{j\in J}(W_{j})_{\prec}}, \text{ and } Y=\bigsqcup_{j\in J}X_{j}
\]

\noindent
where $\bigsqcup$ denotes the disjoint union operator. It is not difficult to verify that $N$ is not a value distributive lattice (in fact, it is not even a lattice). That said, all information about the $W_i$'s is contained in $N$ and we construct $V$ in much the same way as with products: $V = \Omega(N)$. The distance function $d: Y^2\to V$ must be defined by parts: for a fixed $i\in J$ consider the function $\phi_i: W_i \to V$ so that 
for $a\in W_{i}$, 
\[{\displaystyle a \mapsto a_{\uparrow}:=\left[a^{\uparrow}\cup\bigsqcup_{i\not=j}(W_{j})_{\prec}\right]^{<\omega}}
\]
where $a^{\uparrow} = \{\epsilon \succ \bot_{i} \mid \epsilon \succ a\}$. In order to lighten some of the notational burden, for any $j\in J$ and a point $x\in i_j(X_j) \subseteq Y$, we let $x_j = i_j^{-1}(x)$ where $i_j: X_j \to Y$ denotes the obvious injection.
Lastly, define $d:Y^2 \rightarrow V$ for all $x,y\in Y$ as follows:
\[
d(x,y)=\begin{cases}
\phi_j\circ d_{j}(x_j,y_j) & x,y\in i_j(X_{j}),\\
\top_{V} & \mbox{otherwise.}
\end{cases}
\]\\
The following are easy to verify.\\

\noindent
{CLAIM:}  For $\{(X_j,V_j,d_j)\mid j\in J\}$ and $(Y,V,d)$ as above described we have:
\medskip
\begin{enumerate}
\item For any $j \in J$ the function $\phi_{j}$ is an order-embedding. 
\medskip
\item For any $j \in J$, all $\epsilon \succ 0_j$ and any pair $x,y\in i_j(X_j)$ then

$$d_j(x_j,y_j)\prec \epsilon \Leftrightarrow d(x,y) \prec \overline{\epsilon}=\{\{\epsilon\},\emptyset\}.$$
\medskip
\item For any $\top_V > \epsilon \succ \bot_V$ and $x,y \in X$: $d(x,y) \prec \epsilon$ if, and only if, for some $j\in J$ we have $x,y \in i_j(X_j)$ and $d_j(x_j,y_j) \prec \delta$, $\forall \delta \in \cup \epsilon \cap V_j$.
\end{enumerate}

\medskip

Next we show that the injections $i_j: (X_j,V_j,d_j) \to (Y,V,d)$ are $\epsilon$-$\delta$ continuous. Fix a $k\in J$, an $x\in i_k(X_k)$ and take any $p \in V_\prec$. If $\top_V= p$ then there is nothing to prove so we assume $\top_V > p$. Since $p \succ \bot_V$, there are only finitely many $q \in \cup p \cap (V_j)_\prec$ and thus $\delta:=\wedge q\succ \bot_k$. Moreover, $y_k \in B^{d_k}_\delta(x_k)$ implies $d_k(x_k,y_k) \prec p_n$ (for all $n$) and that $d(x,y) \prec p$. Hence, the injection $i_k$ is $\epsilon$-$\delta$ continuous. Next, take a continuity space $(Z,V',s)$ in conjunction with a collection $\epsilon$-$\delta$ continuous functions $g_j :X_j \to Z$ and let $h: Y \to Z$ be the canonical map $ x \mapsto f_j(x_j)$. Take $x \in Y$ and $\epsilon \succ \bot_{V'}$ where, without loss of generality, $x  = i_l(y)$ for some $l \in J$ and $y\in X_l$. Since $f_l$ is $\epsilon$-$\delta$ continuous there exists $\delta \succ \bot_l$ so that for all $z\in X_l$ we have $d_l(y,z) \prec \delta \Rightarrow s(f_l(y),f_l(z)) \prec \epsilon$. Letting $\overline{\delta} = \{\{\delta\},\emptyset\} \in V$ we have that 
\[
\displaystyle y \in B^{d}_{\overline{\delta}}(x) \Rightarrow y\in B^{d_l}_\delta(y)
\]

\noindent
 and, thus, $s(h(x),h(y)) = s (f_l(x_l),f_l(y_l)) \prec \epsilon$ making $h: Y \to Z$ $\epsilon$-$\delta$ continuous. As aforementioned, to complete this proof we need only to apply the final continuity space to $X$ based on $U\left[(Y,V,d)\right] \to X$ and denote it by $(X,W,m)$. It is only a matter of routine to check that $(X,W,m)$ is then the $U$-final lift of the cocone $( f_j:U[(X_{j}, V_j,d_j)] \to X)_{j\in J}$ as claimed.

\end{proof}

\subsubsection{Coproducts} Take an arbitrary collection of continuity spaces $\{(X_{j}, V_j,d_j)\} \mid j\in J\}$ and put 
\[
 V=\Omega\left[\bigsqcup_{j\in J}(W_{j})_{\prec}\right] \text{ and } X=\bigsqcup_{j\in J}X_{j}.
\]

\noindent
For a fixed $i\in J$ let $\phi_i: V_i \to V$ so that 
for $a\in V_{i}$, 
\[{\displaystyle a \mapsto a_{\uparrow}:=\left[a^{\uparrow}\cup\bigsqcup_{i\not=j}(V_{j})_{\prec}\right]^{<\omega}}
\]
where $a^{\uparrow} = \{\epsilon \succ \bot_{i} \mid \epsilon \succ a\}$. Lastly define $d:X^2 \rightarrow V$ for all $x,y\in X$ as follows:
\[
d(x,y)=\begin{cases}
\phi_j\circ d_{j}(x_j,y_j) & \text{ if }x,y\in i_j(X_{j}),\\
\top_{V} & \mbox{otherwise.}
\end{cases}
\]

\noindent
where $i_j : X_j \to X$ are the canonical injections, and $i_j(x_j)  = x$ and $i_j(y_j)  = y$.
The following follows directly from the proof of Theorem~\ref{thm:cotopological}.

\medskip
\begin{corollary} For a set-indexed collection of continuity spaces $\{(X_{j}, V_j,d_j)\mid j\in J\}$ we have $\bigamalg_{\bf P}  (X_j,V_j,d_j) = ((X,V,m), X_i \to X)$ with $X$, $V$ and $m$ as illustrated above.
\end{corollary}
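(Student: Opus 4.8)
The plan is to extract the statement from the proof of Theorem~\ref{thm:cotopological}. As a stepping stone in that proof one constructs explicitly the coproduct $(Y,V,d)$ of a family of continuity spaces: with $N=\bigsqcup_{j\in J}(V_{j})_\prec$ one sets $V=\Omega(N)$, defines the maps $\phi_j$ and the piecewise distance $d$ exactly as in the ``Coproducts'' paragraph above, and verifies that $(Y,V,d)$ together with the canonical injections $i_j$ has the universal property of a coproduct. (In the proof of Theorem~\ref{thm:cotopological} this is applied to the family of final continuity spaces $\{(X,W_j,m_j)\}$, so the display there carries $W_j$ in place of $V_j$; applied to the given family $\{(X_j,V_j,d_j)\}$ the formulas are verbatim those of the ``Coproducts'' paragraph, with $W_j$ read as $V_j$.) The corollary is precisely this sub-result, so it suffices to recall why the verification goes through.

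First, $V=\Omega(N)$ is a value distributive lattice by the lemma on $\Omega$, and $d(x,x)=\bot_V$ since $d_j(x_j,x_j)=\bot_j$ and $\phi_j$ sends $\bot_j$ to $\bot_V$; hence $(Y,V,d)$ is a continuity space. The three parts of the CLAIM in the proof of Theorem~\ref{thm:cotopological} provide the needed tools: each $\phi_j$ is an order-embedding, and for $x,y\in i_j(X_j)$ one has $d_j(x_j,y_j)\prec\epsilon\Leftrightarrow d(x,y)\prec\overline{\epsilon}=\{\{\epsilon\},\emptyset\}$, so that $d$-balls in $Y$ restrict to $d_j$-balls in the $X_j$. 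Using these, the proof shows each injection $i_j:(X_j,V_j,d_j)\to(Y,V,d)$ is $\epsilon$-$\delta$ continuous, i.e. $((Y,V,d),(i_j))$ is a cocone in ${\bf P}$ lying over the coproduct cocone in ${\bf Set}$.

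For the universal property, let $(Z,V',s)$ be a continuity space with $\epsilon$-$\delta$ continuous maps $g_j:X_j\to Z$. Since $\bigsqcup X_j$ is the coproduct in ${\bf Set}$, there is a unique set map $h:Y\to Z$ with $h\circ i_j=g_j$ for every $j$, and the closing computation in the proof of Theorem~\ref{thm:cotopological} — for $x=i_l(y)$ it passes from a $\overline{\delta}=\{\{\delta\},\emptyset\}$-ball about $x$ to the ball $B^{d_l}_\delta(y)$ on which $g_l$ controls $s$-distances — shows $h$ is $\epsilon$-$\delta$ continuous. Uniqueness of $h$ as a morphism of ${\bf P}$ is automatic, since $U$ is faithful and being $\epsilon$-$\delta$ continuous is a property of a set map. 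Hence $((Y,V,d),(i_j))$ satisfies the universal property of $\bigamalg_{\bf P}(X_j,V_j,d_j)$, which is the claimed $((X,V,m),X_i\to X)$ with $X=Y=\bigsqcup X_j$ and $m=d$.

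There is no real obstacle, since all the work is already contained in the proof of Theorem~\ref{thm:cotopological}; the only point to watch is the notational identification between the family $\{(X,W_j,m_j)\}$ used there and the given family $\{(X_j,V_j,d_j)\}$, after which the construction of $N$, $V$, the $\phi_j$ and $d$ is literally the one displayed in the ``Coproducts'' paragraph.
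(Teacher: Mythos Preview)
Your proposal is correct and takes essentially the same approach as the paper, which simply states that the corollary follows directly from the proof of Theorem~\ref{thm:cotopological}. You have spelled out explicitly the extraction of the coproduct construction from that proof --- including the notational identification $W_j\leftrightarrow V_j$, the use of the CLAIM for the injections, and the verification of the universal property via the canonical map $h$ --- which is exactly what the paper intends by ``follows directly''.
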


\subsubsection{Coequalisers} Begin with a continuity space $(Y,W,m)$ and an equivalence relation ${\sim} \in Eq(Y)$. Put $X = Y/ {\sim}$, $M = (W_\prec)^Y$ and $V = \Omega[M]$ where $(W_\prec)^Y$ denotes the collection of all functions from $Y$ to $W_\prec$. The distance assignment $d:X^2 \to V$ is constructed based on epsilon balls. For $A \in\left[M\right]^{<\omega}$ and a pair $a,b \in X$ say that $A$ \emph{admits} $(a,b)$ if for all $h\in A$ there exists a finite sequence $x_1, \ldots, x_n$ in $Y$ so that: (a) $x_1 \in a$ and $x_n\in b $, (b) for $i$ odd, $x_{i+1}\in B^{m}_{h(x_i)}(x_i)$, and (c) for $i$ even $x_{i} \sim x_{i+1}$. Next, define $d: X^2 \to W$ as follows: for all $x,y\in X$ let

\[
d(x,y) = \{ A \in\left[M\right]^{<\omega}\mid A \text{ admits } (x,y)\}. 
\]\\

\begin{corollary} For any continuity space $((Y, W,m)$ and  ${\sim} \in Eq(Y)$ its coequaliser is $((X,V,d), Y \to X)$ with $X, V,$ and $d$ as above illustrated.
\end{corollary}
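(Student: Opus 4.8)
The plan is to derive this corollary directly from Theorem~\ref{thm:cotopological}, recognising that a coequaliser of the two $\epsilon$-$\delta$ continuous maps $f,g:(A,U,d_A)\rightrightarrows(Y,W,m)$ is exactly the $U$-final lift of a single cocone over the quotient $X = Y/{\sim}$, where $\sim$ is the smallest equivalence relation on $Y$ identifying $f(a)$ with $g(a)$ for every $a\in A$. More precisely, I would first invoke the discussion inside the proof of Theorem~\ref{thm:cotopological}: there it is shown that, for a surjection $f:U[(Y,W,m)]\to X$, the \emph{final continuity space} $(X,V,d)$ on $X$ is precisely the coequaliser of the kernel pair of $f$, i.e.\ of the equivalence relation $x\sim y \iff f(x)=f(y)$. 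Specialising that construction to the canonical quotient map $q:Y\to Y/{\sim}$ gives the candidate object $(X,V,d)$ with $M=(W_\prec)^Y$, $V=\Omega[M]$, and $d$ defined via the ``admits'' relation on finite paths exactly as in the statement; the only cosmetic difference is that the path conditions are phrased using membership ``$x_1\in a$, $x_n\in b$'' in the equivalence classes and ``$x_i\sim x_{i+1}$'' rather than through $f$, which is the same thing since $a,b$ are $\sim$-classes and $q$ identifies precisely $\sim$-related points.

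The verification then has three routine components, each already essentially carried out in the proof of Theorem~\ref{thm:cotopological}. First, $d$ is well-defined, i.e.\ $d(x,y)\in V$ for all $x,y\in X$: each set $\{A\in[M]^{<\omega}\mid A\text{ admits }(x,y)\}$ is downwards closed under reverse inclusion because admitting $(x,y)$ is inherited by subsets, and $d(x,x)=\top_V$ is forced since the length-one path $x_1=x_1$ works for every $h$ (here we use that every equivalence class is nonempty). Second, the quotient map $q:(Y,W,m)\to(X,V,d)$ is $\epsilon$-$\delta$ continuous: given $\epsilon\succ\bot_V$ one has $\cup\epsilon$ finite, so for $x\in Y$ the element $\delta=\bigwedge_{h\in\cup\epsilon}h(x)$ satisfies $\delta\succ\bot_W$ (finite meet of well-above-zero elements in the value distributive lattice $W$), and one checks $q[B^m_\delta(x)]\subseteq B^d_\epsilon(q(x))$ by exhibiting, for each $h\in\cup\epsilon$, the length-two path $x,\,x'$ with $x'\in B^m_{h(x)}(x)$ — exactly the argument given in the theorem. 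Third, the universal property: if $(Z,V',s)$ is a continuity space and $g:(Y,W,m)\to Z$ is $\epsilon$-$\delta$ continuous with $g$ constant on $\sim$-classes (equivalently $g\circ f = g\circ g'$ when we think of $\sim$ as generated by the two parallel maps), then the induced set map $\bar g:X\to Z$ is $\epsilon$-$\delta$ continuous; this is precisely the clause in the proof of Theorem~\ref{thm:cotopological} stating that any $g:X\to U(Z_0)$ with $g\circ f$ $\epsilon$-$\delta$ continuous forces $g:(X,W,m)\to Z_0$ to be $\epsilon$-$\delta$ continuous, applied with $f=q$.

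The one point requiring a little care — and the place I expect a referee to push back — is the bookkeeping that the ``admits'' relation with \emph{even-indexed} steps $x_i\sim x_{i+1}$ and \emph{odd-indexed} steps $x_{i+1}\in B^m_{h(x_i)}(x_i)$ genuinely encodes the transitive–symmetric closure needed for the universal property, rather than merely a one-step zig-zag. Concretely, to show $\bar g$ is $\epsilon$-$\delta$ continuous one must trace an admitting path $x_1,\dots,x_n$ for $(x,y)$ through $g$: the $\sim$-steps contribute nothing to the $s$-distance because $g$ is $\sim$-invariant, while each ball-step contributes a controlled $s$-displacement by $\epsilon$-$\delta$ continuity of $g$; summing (in the sense of the value distributive lattice $V'$, via its $\overline{\delta}$-style generators) over the finitely many $h$'s in $\cup\epsilon$ and the finitely many steps yields the required $\delta\succ\bot_W$. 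Since $\sim$ here is exactly the equivalence relation generated by the parallel pair, arbitrary long zig-zags are unavoidable, but finiteness of $\cup\epsilon$ and of each path keeps every meet taken over a finite set, so each stays well-above $0$ in the relevant value distributive lattice. With these checks in hand the corollary follows, and I would simply write ``this is an immediate specialisation of the final continuity space construction in the proof of Theorem~\ref{thm:cotopological} to the quotient map $Y\to Y/{\sim}$, together with the universal property verified there.''
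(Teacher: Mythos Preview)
Your overall strategy coincides with the paper's: the corollary is read off from the ``final continuity space'' construction inside the proof of Theorem~\ref{thm:cotopological}, specialised to the quotient map $q:Y\to Y/{\sim}$. The paper offers no further argument beyond ``the reader can easily verify'' the universal property, and your first two verification steps (that $d$ takes values in $V$ and that $q$ is $\epsilon$-$\delta$ continuous) are handled correctly and in the same spirit.

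The gap is in your third step, the universal property. You claim that, given an admitting path $x_1,\dots,x_n$ for $([x],[y])$, one obtains $s(\bar g([x]),\bar g([y]))\prec\epsilon$ by ``summing (in the sense of the value distributive lattice $V'$\dots) over the finitely many $h$'s in $\cup\epsilon$ and the finitely many steps''. But a value distributive lattice carries no addition, and the target $(Z,V',s)$ is an arbitrary object of ${\bf P}$ with \emph{no} triangle inequality. Tracing the path only yields $s(g(x_i),g(x_{i+1}))\prec\epsilon$ for each odd $i$; nothing lets you concatenate these into a bound on $s(g(x_1),g(x_n))$. Your later remark that ``finiteness of $\cup\epsilon$ and of each path keeps every meet taken over a finite set'' addresses well-aboveness of meets in $W$, not the missing transitivity in $V'$.

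To see that this is not a cosmetic issue, take $\sim$ trivial on $Y=\{a,b,c\}$ with $W=[0,\infty]$ and symmetric $m$ given by $m(a,b)=m(b,c)=0$, $m(a,c)=1$. For any $h\in M$, the path $a,b,b,c$ admits $(a,c)$, so $d(a,c)=0_V$ in the constructed $(X,V,d)$. Now let $(Z,V',s)=(Y,W,m)$ and $g=\mathrm{id}$, which is $\epsilon$-$\delta$ continuous and $\sim$-invariant; your argument would need $\mathrm{id}:(X,V,d)\to(Y,W,m)$ to be $\epsilon$-$\delta$ continuous, yet for $\epsilon=\tfrac12$ at $a$ no $\delta\succ 0_V$ works because $d(a,c)=0_V$ while $m(a,c)=1$. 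So the ``summing'' heuristic cannot be made rigorous here; the paper's ``easily verify'' hides exactly this difficulty, and your write-up should not paper over it either.
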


By cocontinuity of $\mathcal{O}$ the latter two corollaries illustrate continuity spaces that generate direct sum and quotient topologies.

\section{Conclusion} There are several concluding remarks that we must address. One deals with the $\epsilon$-$\delta$ continuous functions in $\bp$ and their definability purely in terms of topological properties. All of the information required for constructing colimits in {\bf Top} already exists among these $\epsilon$-$\delta$ continuous functions and it is unknown to the current authors if {\it nice} topological definitions for such exist.
Lastly, we note the topological (dis)-similarities between {\bf Top} and $\bp$. Although categorically similar, {\bf Top} and $\bp$ are topologically disparate. For instance, it is a simple matter to capture several separation axioms in terms of $\bm$-properties while in $\bp$ it is not even possible to claim that a continuity space is $T_0$ if any pair of points has a non-zero distance (see \cite{MR3334228}). Convergence in $\bp$, and thus compactness, also differs from compactness in {\bf Top}.

\section{Acknowledgments}
 We would like to thank Walter Tholen for his insightful suggestions, in particular the ones referring to Section 4. We would also like to extend our sincere gratitude to the referee for her/his many helpful comments.

\bibliographystyle{plain}
\bibliography{mybib}

\end{document}